\numberwithin{equation}{section}
\tiny\color{gray},
\definecolor{VerdeOlivo}{rgb}{0.3,0.5,0.1}
\definecolor{Magenta}{rgb}{.65,0.15,.2}
\definecolor{Gris}{gray}{0.3}
\newtheorem{Theorem}{Theorem}[section] 
\newtheorem{Definition}[Theorem]{Definition}
\newtheorem{Proposition}[Theorem]{Proposition}  
\newtheorem{Lemma}[Theorem]{Lemma} 
\newtheorem{Corollary}[Theorem]{Corollary}
\newtheorem{Remark}[Theorem]{Remark}
\newtheorem{Example}[Theorem]{Example}
\newtheorem{Conjecture}[Theorem]{Conjecture} 
\newtheorem{Algorithm}[Theorem]{Algorithm}
\theoremstyle{definition}
\newcommand{\bff}[1]{{\bf #1}}
\begin{document}
	
%===================================% 
%===============Titulo================% 
%===================================% 
	
\title[Algorithmic aspects of arithmetical structures]{Algorithmic aspects of arithmetical structures}
	
%===================================% 
%===============Autores==============% 
%===================================% 
	
\author{Carlos E. Valencia}
\email[C. E. ~Valencia]{cvalencia@math.cinvestav.edu.mx, cvalencia75@gmail.com}
\author{Ralihe R. Villagr\'an}
\email[R. R. ~Villagr\'{a}n]{rvillagran@math.cinvestav.mx, ralihemath@gmail.com }
\thanks{Carlos E. Valencia was partially supported by
SNI and Ralihe R. Villagr\'an by CONACyT}
\address{
Departamento de Matem\'aticas\\
Centro de Investigaci\'on y de Estudios Avanzados del IPN\\
Apartado Postal 14--740 \\
07000 Mexico City, D.F. 
} 
	
%===================================% 
%==============Resumen==============% 
%===================================% 
\maketitle
\vspace{-8mm}	
\begin{abstract} 
Arithmetical structures on graphs were first introduced in~\cite{Lorenzini89}.
Later in~\cite{arithmetical} they were further studied in the setting of square non-negative integer matrices. 
In both cases, necessary and sufficient conditions for the finiteness of the set of arithmetical structures were given.

More precisely, an arithmetical structure on a non-negative integer matrix $L$ with zero diagonal is a pair $(\mathbf{d},\mathbf{r})\in \mathbb{N}_+^n\times \mathbb{N}_+^n$ such that
\[
(\textrm{Diag}(\mathbf{d})-L)\mathbf{r}^t=\mathbf{0}^t\text{  and }\gcd(r_1,\ldots,r_n)=1.
\]
Thus, arithmetical structures on $L$ are solutions of the polynomial Diophantine equation 
\[
f_L(X):=\det(\text{Diag}(X)-L)=0.
\]
Therefore, it is of interest to ask for an algorithm that compute them.

We present an algorithm that computes arithmetical structures on a square integer non-negative matrix $L$ with zero diagonal.
In order to do this we introduce a new class of Z-matrices, which we call quasi $M$-matrices.
\end{abstract}

{\small \textbf{Keywords:} Arithmetical structures, Diophantine equation, $M$-matrix, Hilbert's tenth problem.}

{\small \textbf{AMS Mathematical Subject Classification 2020:} Primary 11D72,11Y50; Secondary 11C20,15B48}

%===================================% 
%=============Introduction=============% 
%===================================%
	
\section{Introduction}\label{intro}
Given a non-negative integer matrix $L$ with zero diagonal, a pair $(\mathbf{d},\mathbf{r})\in \mathbb{N}_+^n\times \mathbb{N}_+^n$ is called an arithmetical structure of $L$ if 
\[
(\textrm{Diag}(\mathbf{d})-L)\mathbf{r}^t=\mathbf{0}^t\text{  and }\gcd(r_1,\ldots,r_n)=1.
\]
We impose the condition of primitiveness, $\gcd(r_1,\ldots,r_n)=1$, on the vector $\bff{r}$ because $(\textrm{Diag}(\mathbf{d})-L)\mathbf{r}^t=\mathbf{0}^t$ implies that $(\textrm{Diag}(\mathbf{d})-L)c\mathbf{r}^t=\mathbf{0}^t$ for all $c\in \mathbb{N}_+$ and therefore any common factor in the entries of $\bff{r}$ is irredundant.
 
Arithmetical structures were first introduced for graphs (more precisely when $L$ is the adjacency matrix of a graph) by D. Lorenzini in \cite{Lorenzini89} as some intersection matrices that arise in the study of degenerating curves in algebraic geometry. 
A more combinatorial aspect of arithmetical structures on graphs have been studied in~\cite{PathsCycles} and~\cite{connectivityone}.
Unless otherwise specified, $L$ will always denote a square integer non-negative matrix of size $n$ with zero diagonal.

It is important to recall that the set of arithmetical structures on irreducible integer non-negative matrices is finite. 

\begin{Theorem}{\cite[Theorem 3.8]{arithmetical}}\label{finiteness}
If $L$ is an integer non-negative square matrix with zero diagonal, then the set of arithmetical structures $\mathcal{A}(L)$ on $L$ is finite if and only if $L$ is irreducible. 
\end{Theorem}

A matrix $A$ is called reducible whenever there exists a permutation matrix $P$ such that:
\[
P^tAP=\begin{pmatrix}
X & Y\\
0 & Z
\end{pmatrix}.
\]
That is, $A$ is similar via a permutation to a block upper triangular matrix. 
We say that $A$ is irreducible when it is not reducible. 
Equivalently, when $A$ is the adjacency matrix of a digraph, $A$ is irreducible if and only if the digraph associated to $A$ is strongly connected. %see~\cite{Godsil}. 
\begin{Remark}
When $L$ is a block matrix its arithmetical structures can be obtained from the arithmetical structures of its diagonal blocks. 
Something similar when $L$ is reducible.
\end{Remark}

Since the set of arithmetical structures is finite, it is natural to ask if there exists an algorithm that computes them.
We recall that every vector $\bff{d}$ of an arithmetical structure $(\bff{d}, \bff{r})$ of the matrix $L$ is a solution of the polynomial Diophantine equation
\[
f_L(X):=\det(\textrm{Diag}(\mathbf{X})-L)=0.
\]

However, its important to note that not any solution of this Diophantine equation is an arithmetical structure on $L$, see~\cite[Remark 3.11]{arithmetical}.
Therefore computing arithmetical structures of a matrix consist on computing a subset of the solutions of a very special class of Diophantine equations, those whose polynomial is the determinant of a matrix with variables in the diagonal.

The tenth problem on Hilbert's list asks to find an algorithm such that, given any polynomial Diophantine equation, it determines whether it has a solution over the integers. 
Based on the important preliminary work by Martin Davis, Hilary Putnam and Julia Robinson, Yuri V. Matiyasevich showed in 1970 that tenth problem on Hilbert's has a negative answer. 
That is, that in general no such algorithm exists, see~\cite{Matiyasevich}. 

The purpose of this article is to give an algorithm that computes the arithmetical structures of an integer non-negative square matrix with zero diagonal.
This article is divided in two sections.
In section~\ref{ae}, we recall some theory about $M$-matrices as given in~\cite{arithmetical}. 
Additionally we introduce the class of quasi $M$-matrices. 
These matrices have all their proper principal minors being positive, but unlike $M$-matrices and almost non-singular $M$-matrix its determinant is not necessarily non-negative.
Moreover, we will establish some properties of these matrices that help us to find the algorithm and prove its correctness. 

Now, let
\[
\mathcal{D}_{\geq 0}(L)=\{ \bff{d}\in\mathbb{N}_+^n\ |\ (\text{Diag}(\mathbf{d})-L)\text{ is an almost non-singular }M\text{-matrix} \},
\]
where $L$ is a square integer non-negative matrix $L$ with zero diagonal.
By Dickson's Lemma the set of minimal elements $\min\mathcal{D}_{\geq 0}(L)$ of $\mathcal{D}_{\geq 0}(L)$ is finite. 
In Section~\ref{algo}, we present an algorithm that computes $\min\mathcal{D}_{\geq 0}(L)$, see Algorithm~\ref{A1}. 
Using this algorithm as subroutine we get a second algorithm that computes the arithmetical structures on $L$, see Algorithm~\ref{AA}.

At the end of this article we use the algorithm developed to present some computational evidence for the following conjecture.
This data will give an idea of the practical complexity of the problem of compute arithmetical structures.
\begin{Conjecture}\cite[Conjecture 6.10]{arithmetical}
Let $G$ be a simple graph with $n$ vertices, then 
\[
\Big|\,\mathcal{A}(P_n)\,\Big| \leq \Big|\,\mathcal{A}(G)\,\Big| \leq \Big|\,\mathcal{A}(K_n)\,\Big|,
\]
where $P_n$ and $K_n$ are the path and the complete graph on $n$ vertices respectively.
\end{Conjecture}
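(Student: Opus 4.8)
The plan is to first discard the determinantal definition in favor of a purely combinatorial one. Throughout I assume $G$ connected, which by Theorem~\ref{finite} is exactly what makes $|\mathcal{A}(G)|$ finite. Fixing a primitive $\mathbf{r}\in\mathbb{N}_+^n$, the equation $(\mathrm{Diag}(\mathbf{d})-A(G))\mathbf{r}^t=\mathbf{0}^t$ forces $d_i=\big(\sum_{j\sim i}r_j\big)/r_i$, so a primitive $\mathbf{r}$ extends to an arithmetical structure of $G$ if and only if $r_i \mid \sum_{j\sim i} r_j$ at every vertex $i$, and when it does the extension is unique. Hence $|\mathcal{A}(G)|$ equals the number of primitive vectors $\mathbf{r}\in\mathbb{N}_+^n$ satisfying this system of one divisibility constraint per vertex. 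This reformulation is scale invariant and removes the determinant entirely, turning the conjecture into an extremal counting problem over lattice vectors.

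With this dictionary I would reduce the two inequalities to the following statements. (I) \emph{Edge monotonicity}: if $H$ is obtained from a graph on the same vertex set by adding one edge, then $|\mathcal{A}(G)|\le |\mathcal{A}(H)|$. (II) \emph{Path minimality among trees}: $|\mathcal{A}(P_n)|\le |\mathcal{A}(T)|$ for every tree $T$ on $n$ vertices. Granting these, the upper bound is immediate: since $G\subseteq K_n$, iterating (I) along the missing edges gives $|\mathcal{A}(G)|\le|\mathcal{A}(K_n)|$. For the lower bound, choose any spanning tree $T$ of $G$; then (II) gives $|\mathcal{A}(P_n)|\le|\mathcal{A}(T)|$ and iterating (I) from $T$ up to $G$ gives $|\mathcal{A}(T)|\le|\mathcal{A}(G)|$. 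The detour through a spanning tree is forced, because $P_n$ is in general not a subgraph of $G$ (for instance it is not a subgraph of a star), so (I) alone cannot deliver the lower bound.

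To prove (I) I would build an explicit injection $\mathcal{A}(G)\hookrightarrow\mathcal{A}(H)$ for the added edge $e=\{a,b\}$. Writing $s_i=\sum_{j\sim_G i}r_j$, a $G$-valid $\mathbf{r}$ satisfies $r_a\mid s_a$ and $r_b\mid s_b$, while $H$-validity replaces these by $r_a\mid s_a+r_b$ and $r_b\mid s_b+r_a$; every condition at the remaining vertices is literally unchanged. Since $r_a\mid s_a$ already holds, $H$-validity of the same vector amounts to $r_a\mid r_b$ and $r_b\mid r_a$, that is $r_a=r_b$, so the identity map keeps only a thin subset and a genuinely nontrivial injection is needed. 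The natural candidate is to rescale the kernel vector along the edge, replacing $r_a,r_b$ by multiples that restore the two divisibilities, pass to the primitive representative, and track the induced changes of the sums $s_j$ at the neighbors of $a$ and $b$. Proving that such a correction is globally consistent and injective is, I expect, the main obstacle of the whole argument, precisely because $r_a$ and $r_b$ also occur inside the sums controlling their neighbors, so the two endpoint conditions are coupled to the rest of the vector rather than local.

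For (II) I would argue by induction on $n$ through a leaf recursion: deleting a leaf $\ell$ with neighbor $p$ relates $\mathcal{A}(T)$ to $\mathcal{A}(T-\ell)$ via the constraint $r_\ell\mid r_p$ at the leaf together with the modification of $p$'s constraint by the term $r_\ell$. Assembling these contributions yields a transfer-type recursion whose value on the path reproduces the Catalan number $C_{n-1}=|\mathcal{A}(P_n)|$. To compare an arbitrary tree with the path I would use a straightening operation that removes branch vertices one at a time, relocating a pendant branch so as to lengthen the spine, and show that each move induces an injection of path-type structures into the structures of the less linear tree; composing the moves embeds $\mathcal{A}(P_n)$ into $\mathcal{A}(T)$. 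The delicate point here, parallel to (I), is that reattaching a branch again alters a divisibility constraint at the reattachment vertex, so the same coupling must be controlled; over trees, however, the absence of cycles should make this bookkeeping tractable, which is why I would attempt (II) before tackling the harder (I).
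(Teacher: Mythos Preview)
The statement you are addressing is a \emph{conjecture}, not a theorem: the paper does not prove it. It is quoted verbatim from \cite[Conjecture~6.10]{arithmetical}, and the only support the present paper offers is the table of computed values of $|\mathcal{A}(G)|$ for all connected simple graphs on at most five vertices, produced by running Algorithm~\ref{AA}. There is therefore no proof in the paper to compare your proposal against.

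As for the proposal itself, it is not a proof but a reduction scheme with the hard parts explicitly left open. Your two building blocks (I) edge monotonicity and (II) path minimality among trees are themselves open problems of the same calibre as the conjecture; in particular, (I) is not known, and your own discussion correctly identifies why the obvious injection fails (the identity only preserves structures with $r_a=r_b$) and why any rescaling fix is globally coupled through the neighbours of $a$ and $b$. No mechanism is given to resolve that coupling, so (I) remains a wish rather than a lemma. Statement (II) is likewise only sketched: the leaf recursion and the ``straightening'' move are described, but the claimed injection at each branch-removal step is exactly the same kind of nonlocal divisibility repair that blocks (I). In short, you have rephrased one open conjecture as the conjunction of two others; this may be a reasonable programme, but it is not a proof, and nothing here goes beyond what the paper already states as unresolved.
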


Throughout this article we use the usual partial order over $\mathbb{R}^n$ given by $\mathbf{a}\leq\mathbf{b}$ if and only if $\bff{a}_i\leq \bff{b}_i$ for all $1\leq i \leq n$ and $\mathbf{a}$,$\mathbf{b}\in\mathbb{R}^n$. 
In a similar way, $\mathbf{a}<\mathbf{b}$ if and only if $\mathbf{a}\leq\mathbf{b}$ and $\mathbf{a}\neq\mathbf{b}$.
It is well known that this is a well partial order over $\mathbb{N}^n$. 
The following property of subsets of $\mathbb{N}^n$ under this partial order is known as Dickson's Lemma.

\begin{Lemma}\cite{Dickson}
For any $S\subseteq \mathbb{N}^n$, the set  
\[
\min(S)=\{ \mathbf{x}\in S \, | \, \mathbf{y}\nleq\mathbf{x}\ \forall\ \, \mathbf{y}\in S \}
\] 
of minimal elements of $S$ under the usual partial order $\leq$ is finite.
\end{Lemma}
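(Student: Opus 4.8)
The plan is to reduce the statement to the fact that $(\mathbb{N}^n,\leq)$ admits no infinite antichain, and to prove that fact by an elementary coordinate-by-coordinate subsequence extraction. First I would observe that the set $\min(S)$ is itself an antichain: if $\mathbf{x},\mathbf{y}\in\min(S)$ with $\mathbf{x}\leq\mathbf{y}$, then minimality of $\mathbf{y}$ forces $\mathbf{x}=\mathbf{y}$ (were $\mathbf{x}\neq\mathbf{y}$, we would have $\mathbf{x}<\mathbf{y}$ with $\mathbf{x}\in S$, contradicting $\mathbf{y}\in\min(S)$). Hence any two distinct minimal elements are incomparable, and it suffices to show that every antichain in $\mathbb{N}^n$ is finite, equivalently that every infinite sequence in $\mathbb{N}^n$ contains two comparable terms.

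The key auxiliary fact I would isolate first is purely one-dimensional: every infinite sequence $(a_k)_{k\geq 1}$ in $\mathbb{N}$ has an infinite non-decreasing subsequence. This follows from the well-ordering of $\mathbb{N}$: set $i_1$ to be an index attaining $\min\{a_k : k\geq 1\}$, and inductively let $i_{j+1}>i_j$ attain $\min\{a_k : k>i_j\}$; since each minimum is taken over a progressively smaller index set, the resulting subsequence satisfies $a_{i_1}\leq a_{i_2}\leq\cdots$.

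Now I would take any infinite sequence $\mathbf{v}_1,\mathbf{v}_2,\ldots$ in $\mathbb{N}^n$ and apply the one-dimensional fact successively to each coordinate. Applying it to the first coordinates yields an infinite subsequence whose first coordinates are non-decreasing; restricting to this subsequence and applying the fact to the second coordinates refines it to an infinite subsequence that is non-decreasing in both the first and second coordinates; after $n$ such refinements I obtain an infinite subsequence $\mathbf{w}_1,\mathbf{w}_2,\ldots$ non-decreasing in every coordinate, so $\mathbf{w}_1\leq\mathbf{w}_2\leq\cdots$. In particular $\mathbf{w}_1\leq\mathbf{w}_2$ are comparable terms of the original sequence. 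This shows $\mathbb{N}^n$ has no infinite antichain, and together with the first paragraph it yields that $\min(S)$ is finite (the case $S=\emptyset$ being trivial).

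I do not expect a genuine obstacle here, since this is the classical statement; the only point demanding care is the bookkeeping in the $n$-fold nested extraction, that is, checking that each refinement preserves the monotonicity already arranged in the earlier coordinates (which it does, as passing to a subsequence preserves a non-decreasing pattern). A structurally different route would be to prove by induction on $n$ that $(\mathbb{N}^n,\leq)$ is a well-quasi-order, using that a finite product of well-quasi-orders is again one; I would regard the direct extraction above as shorter and more transparent for the present purpose.
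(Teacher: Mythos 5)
Your proof is correct; note, however, that the paper does not prove this lemma at all --- it is stated as a known result with a citation to Dickson's 1913 paper --- so there is no internal argument to compare against, and your proposal supplies a self-contained proof where the paper relies on a reference. Your route is the standard one: $\min(S)$ is an antichain, and $(\mathbb{N}^n,\leq)$ admits no infinite antichain, which you establish by $n$ successive subsequence extractions, each using the one-dimensional fact (via well-ordering of $\mathbb{N}$) that every infinite sequence of naturals has an infinite non-decreasing subsequence. All steps are sound: the extraction does preserve monotonicity in the coordinates already treated, and applying the conclusion to an enumeration of a putatively infinite $\min(S)$ produces two distinct comparable minimal elements, contradicting the antichain property. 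One small point worth flagging: the paper's displayed definition of $\min(S)$, read literally, quantifies over all $\mathbf{y}\in S$ including $\mathbf{y}=\mathbf{x}$, which would make $\min(S)$ empty for every nonempty $S$; your reading --- that no $\mathbf{y}\in S$ satisfies $\mathbf{y}<\mathbf{x}$ --- is clearly the intended one, and your antichain argument is phrased consistently with it.
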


%=====================================================
%==Arithmetical Structures on non-negative matrices===
%=====================================================

\section{Arithmetical Structures on non-negative matrices}\label{ae}
In this section we recall the concepts of arithmetical structures and $M$-matrices.
Additionally we introduce a wider class of matrices that share some properties with $M$-matrices.
This new class of matrices, called quasi $M$-matrices, will be useful in the construction of the algorithm that computes the arithmetical structures on a matrix. Henceforth we assume that by ``matrix" we mean a square matrix of size $n$ for some positive integer $n$ unless the contrary is stated.

Given an integer non-negative matrix $L$ with diagonal zero, let 
\[
\mathcal{A}(L) = \Big\{ \left(\mathbf{d},\mathbf{r}\right)\in \mathbb{N}^n_+ \times \mathbb{N}^n_+ \, \Big| \, \left( \text{Diag}(\mathbf{d})-L\right) \mathbf{r}^t=\mathbf{0}^t \, \text{ and } \gcd(r_1,\ldots, r_n)=1\,  \Big\} 
\]
be the set of arithmetical structures on $L$.
Also, let
\[
\mathcal{D}(L)=\big\{ \mathbf{d}\in\mathbb{N}_+^n\, \big| \, (\mathbf{d},\mathbf{r})\in\mathcal{A}(L) \big\}\text{  and  }\mathcal{R}(L)=\big\{ \mathbf{r}\in\mathbb{N}_+^n\, \big|\, (\mathbf{d},\mathbf{r})\in\mathcal{A}(L) \big\},
\]
be the sets of \textit{d-arithmetical structures} and \textit{r-arithmetical structures} of $L$ respectively. 
As the next result shows it is not difficult to characterize when $\mathcal{A}(L)$ is non empty. 

\begin{Proposition}\label{empty}
If $L$ is a non-negative matrix with zero diagonal, then $\mathcal{A}(L) \neq \emptyset$ if and only if $L$ has no row with all entries equal to zero.
\end{Proposition}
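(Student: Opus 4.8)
The plan is to read the defining equation $(\text{Diag}(\mathbf{d})-L)\mathbf{r}^t=\mathbf{0}^t$ one row at a time. Because $L$ has zero diagonal, the $i$-th coordinate of this equation is exactly
\[
d_i r_i = \sum_{j=1}^n L_{ij} r_j = \sum_{j \neq i} L_{ij} r_j,
\]
so $d_i r_i$ must equal the $i$-th entry of $L\mathbf{r}^t$. Both implications will fall out of inspecting this scalar identity together with the positivity constraints $\mathbf{d},\mathbf{r}\in\mathbb{N}_+^n$.

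For the forward direction I would argue by contrapositive. Suppose some row of $L$, say row $i$, is identically zero. Then for every $\mathbf{r}$ the right-hand side above vanishes, forcing $d_i r_i = 0$; but $d_i \geq 1$ and $r_i \geq 1$ make this impossible. Hence no pair $(\mathbf{d},\mathbf{r})$ can satisfy the equation, i.e. $\mathcal{A}(L)=\emptyset$. Equivalently, $\mathcal{A}(L)\neq\emptyset$ forces every row of $L$ to be nonzero.

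For the converse I would exhibit an explicit witness. Assuming no row of $L$ is zero, set $\mathbf{r}=\mathbf{1}$, which is trivially primitive, and define $d_i := \sum_{j\neq i} L_{ij}$, the $i$-th row sum of $L$. With this choice each row equation holds by construction, and since row $i$ has nonnegative integer entries and is not all zero, its sum satisfies $d_i \geq 1$; thus $\mathbf{d}\in\mathbb{N}_+^n$. Therefore $(\mathbf{d},\mathbf{1})\in\mathcal{A}(L)$ and the set is nonempty.

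The only point requiring any care — and it is mild — is the converse, where one must produce a genuine arithmetical structure rather than merely a solution of $f_L(X)=0$. The choice $\mathbf{r}=\mathbf{1}$ is what makes this effortless, since it satisfies both the matrix equation and the primitivity condition $\gcd(r_1,\ldots,r_n)=1$ at once, so no real obstacle arises.
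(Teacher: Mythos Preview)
Your proof is correct and follows essentially the same approach as the paper: both directions hinge on reading the $i$-th row of the defining equation as $d_i r_i = (L\mathbf{r}^t)_i$, and for the converse both you and the authors exhibit the canonical arithmetical structure $(\mathbf{d},\mathbf{r})=(L\mathbf{1}^t,\mathbf{1})$. The only cosmetic difference is that the paper argues the forward implication directly rather than by contrapositive, but the content is identical.
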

\begin{proof}
$(\Rightarrow)$ If $\mathcal{A}(L)\neq \emptyset$, then there exists $\mathbf{d}, \mathbf{r}\in\mathbb{N}_+^n$ such that $[ \text{diag}(\mathbf{d})-L]\mathbf{r}^t=\mathbf{0}^t$. 
Thus $(L\bff{r}^t)_i=\mathbf{d}_i\mathbf{r}_i\geq 1$ for all $1\leq i \leq n$.
Moreover, since $L$ is integer non-negative and $\bff{r}\geq \bff{1}$, then $L\bff{1}^t\geq \bff{1}^t$. 
That is, $L$ has no row with all entries equal to zero.

$(\Leftarrow)$ Since $L$ is integer non-negative and has no row with all entries equal to zero, then $L\mathbf{1}^t\geq \bff{1}^t$.
Thus $(\textrm{Diag}(L\mathbf{1}^t)-L)\bff{1}^t=\bff{0}^t$ and therefore $(L\mathbf{1}^t,\mathbf{1})$ is an arithmetical structure of $L$.
\end{proof}

When $L\bff{1}^t\geq \bff{1}^t$ (that is, $L$ has all its rows different to $\bff{0}$), the arithmetical struture $(L\mathbf{1}^t,\mathbf{1})$ is called the canonical (or trivial) arithmetical structure of $L$.

%======================================%
%==============M-matrices===============%
%======================================%

\subsection{$M$-matrices}
A real matrix $A$ is a $Z$-matrix if $A_{ij}\leq 0$ for all $i\neq j$.
Also, a $Z$-matrix is an $M$-matrix if there exists a non-negative matrix $N$ and a non-negative number $\alpha$ such that $A = \alpha I- N$ with $\alpha$ greater or equal to the the spectral radius of $N$.
A more convenient characterization of $M$-matrices and non-singular $M$-matrices is given in terms of its principal minors.

\begin{Theorem}{\cite[Theorem 6.4.6 ($A_1$) and 6.2.3 ($A_1$)]{berman1994nonnegative}}
A $Z$-matrix is an $M$-matrix if and only if all of its principal minors are non-negative.
Moreover, an $M$-matrix is non-singular (that is, an $M$-matrix with determinant different from zero) if and only if its principal minors are positive.
\end{Theorem}

We recall that a principal minor of a matrix is the determinant of one of its sub-matrices obtained by eliminating the rows and columns indexed by a same subset.
Non-singular and singular matrices were studied in~\cite[Chapter 6]{berman1994nonnegative}.
$M$-matrices are present in a large variety of mathematical subjects, like numerical analysis, probability, economics, operations research, etc., see~\cite{berman1994nonnegative} and the references therein.
The next class of $M$-matrices were introduced in~\cite{arithmetical}.

\begin{Definition}
An $M$-matrix $M$ is an almost non-singular $M$-matrix if all of its proper principal minors are positive. %and its determinant is non-negative.
\end{Definition}

That is, an $M$-matrix is an almost non-singular if and only if its minors are positive with the exception perhaps of its determinant which is non-negative.
Thus, $M$ is an almost non-singular $M$-matrix of size $n$ if and only if all of its proper sub-matrices of size strictly less than $n$ are non-singular $M$-matrices and $\det(M)\geq 0$. 
The next result relates arithmetical structures on a matrix and $M$-matrices.

\begin{Theorem}\cite[Theorem 3.2]{arithmetical}\label{almostrs}
If $M$ is a $Z$-matrix, then $M$ is an almost non-singular $M$-matrix with $\mathrm{det}(M)=0$ if and only if $M$ is irreducible and there is a vector $\bff{r}> \bff{0}$ such that $M\bff{r}^t=\bff{0}^t$.
\end{Theorem}

Therefore when $M$ is an irreducible $Z$-matrix, the concept of arithmetical structure is equivalent to that of almost non-singular $M$-matrix. 
A direct consequence of Theorem~\ref{almostrs} is the following result.

\begin{Corollary}\cite[Corollary 3.3]{arithmetical}\label{coroarithmetical}
If $M$ is an irreducible $Z$-matrix, then there exists $\mathbf{r}$ with all its entries positive such that $M\mathbf{r}^t= 0$ if and only if there exists $\mathbf{s}$ with all its entries positive such that $M^t\mathbf{s}^t = 0$.
\end{Corollary}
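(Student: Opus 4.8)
The plan is to deduce this entirely from Theorem~\ref{almostrs} by exploiting the symmetry of its hypotheses under transposition; since the statement is itself symmetric in $M$ and $M^t$, it suffices to prove one implication and invoke that symmetry for the other.

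First I would record three elementary facts about $M^t$. (i) If $M$ is a Z-matrix then so is $M^t$, since transposition fixes the diagonal and merely interchanges the off-diagonal entries in positions $(i,j)$ and $(j,i)$, both of which are nonpositive. (ii) $M$ is irreducible if and only if $M^t$ is irreducible: reversing the orientation of every arc of a strongly connected digraph again yields a strongly connected digraph; equivalently, from $P^tM^tP=(P^tMP)^t$ one sees that a block upper triangular form for $M$ forces a block lower triangular form for $M^t$, so neither can be reducible without the other being so. (iii) For any index set $S$, the principal submatrix of $M^t$ on $S$ is the transpose of the principal submatrix of $M$ on $S$, and determinants are transpose-invariant; hence $M$ and $M^t$ have exactly the same list of principal minors. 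In particular $\det(M^t)=\det(M)$, and $M^t$ is an almost non-singular $M$-matrix if and only if $M$ is.

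With these in hand the argument is short. Assume there is $\mathbf{r}>\mathbf{0}$ with $M\mathbf{r}^t=\mathbf{0}^t$. By the ``moreover'' part of Theorem~\ref{almostrs}, the irreducibility of $M$ together with this positive kernel vector shows that $M$ is an almost non-singular $M$-matrix with $\det(M)=0$. By facts (i)--(iii), $M^t$ is then an irreducible Z-matrix which is likewise an almost non-singular $M$-matrix with $\det(M^t)=0$. Applying the same equivalence of Theorem~\ref{almostrs} now to $M^t$ produces a vector $\mathbf{s}>\mathbf{0}$ with $M^t\mathbf{s}^t=\mathbf{0}^t$, which is exactly the desired conclusion.

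Finally, the reverse implication follows by replacing $M$ with $M^t$ throughout and using $(M^t)^t=M$, so that a positive kernel vector of $M^t$ yields one for $M$ by the identical reasoning. The only point requiring any care is fact (iii)---that the \emph{entire} list of principal minors, and not merely the determinant, is preserved under transposition---since it is precisely this that lets the transpose inherit the ``almost non-singular $M$-matrix'' property; everything else is a direct citation of Theorem~\ref{almostrs}.
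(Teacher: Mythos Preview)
Your proof is correct and follows exactly the route the paper indicates: the corollary is stated as ``a direct consequence of Theorem~\ref{almostrs}'' and is cited from \cite{arithmetical} without a detailed argument here, and your derivation via the transpose-invariance of principal minors and irreducibility is precisely how one makes that consequence explicit.
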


Thus Corollary~\ref{coroarithmetical} implies that if $L$ is a non-negative matrix with zero diagonal $L$, then $L$ and $L^t$ have the same set of $d$-arithmetical structures, but not necessarily the same set of $r$-arithmetical structures. 
Now, let us present the next properties of almost non-singular $M$-matrices.
	
\begin{Theorem}\cite[Theorem 2.6]{arithmetical}\label{almostequiv}
If $M$ is a real $Z$-matrix, then the following conditions are equivalent:
\begin{enumerate}
\item $M$ is an almost non-singular $M$-matrix.
\item $M+D$ is a non-singular $M$-matrix for any diagonal matrix $D>0$.
\item $\det(M)\geq 0$ and $\det(M+D)>\det(M+D^{'})>0$ for any diagonal matrices such that $D>D^{'}>0$.
\end{enumerate}
\end{Theorem}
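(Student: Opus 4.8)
The plan is to establish the cycle (1) $\Rightarrow$ (2) $\Rightarrow$ (3) $\Rightarrow$ (1). The common engine is the multilinear expansion of the determinant along the diagonal: for any diagonal $D=\mathrm{Diag}(d_1,\dots,d_n)$,
\[
\det(M+D)=\sum_{\alpha\subseteq\{1,\dots,n\}}\Big(\prod_{i\in\alpha}d_i\Big)\det\big(M[\alpha^c]\big),
\]
where $M[\beta]$ is the principal submatrix indexed by $\beta$ and $\alpha^c$ the complement; thus each coefficient is a principal minor of $M$, the coefficient of $\prod_i d_i$ is $1$, and the constant term is $\det M$. I read ``$D>0$'' in the paper's own partial order, namely $D\ge 0$ with $D\neq 0$; this reading is what makes diagonals with some zero entries carry information, and it is essential below. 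For (1) $\Rightarrow$ (2) I would show that \emph{every} principal minor of $M+D$ is positive, so $M+D$ is a non-singular $M$-matrix. For a proper $\alpha$, the submatrix $M[\alpha]$ has all its minors among the proper minors of $M$, hence is a non-singular $M$-matrix by (1); adding the non-negative diagonal $D[\alpha]$ leaves every term of its expansion non-negative with strictly positive constant term $\det(M[\alpha])$, so $\det(M[\alpha]+D[\alpha])>0$. For the full determinant the constant term is $\det M\ge 0$, and choosing a coordinate $j$ with $d_j>0$ the term $\alpha=\{j\}$ equals $d_j\det(M[\widehat{\jmath}])>0$, while all other terms are non-negative.

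For (2) $\Rightarrow$ (3), positivity $\det(M+D')>0$ is immediate and $\det M\ge 0$ follows by continuity from $\det(M+\epsilon I)>0$ as $\epsilon\to 0^{+}$. The monotonicity uses that a principal submatrix of a non-singular $M$-matrix is again one (immediate from the minor definition): for $D''>0$ the matrix $M+D''$ is a non-singular $M$-matrix, so $\partial_{d_i}\det(M+D'')=\det\big((M+D'')[\widehat{\imath}]\big)>0$ for every $i$. Hence the directional derivative of $\det(M+\cdot)$ in the direction $D-D'$ is a non-negative combination of these positive partials with at least one positive weight, so it is strictly positive along the whole segment from $D'$ to $D$ (which stays in the region $>0$ since $D>D'>0$), giving $\det(M+D)>\det(M+D')$.

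The implication (3) $\Rightarrow$ (1) is the heart of the matter, and the obstacle is strictness: the hypotheses only constrain the full-size determinant, and naive limits recover proper minors with sign $\ge 0$ rather than $>0$. My plan is two steps. First I isolate the $(n-1)$-minors exactly, using strict monotonicity: since $\det$ is affine in $d_i$, for a base diagonal $D'>0$ one has $\det(M+D'+tE_{ii})-\det(M+D')=t\,\det\big((M+D')[\widehat{\imath}]\big)$, which (3) forces to be positive; taking $D'=sE_{ii}$, which vanishes on $\widehat{\imath}$, yields $\det(M[\widehat{\imath}])>0$, and letting $D'$ vary gives $\det\big(M[\widehat{\imath}]+F\big)>0$ for every non-negative diagonal $F$ of size $n-1$. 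Here it is crucial that $sE_{ii}$ is admissible as a base point, i.e.\ that ``$>0$'' allows coordinates to vanish; under a strict-in-every-entry reading this trick would only return $\det(M[\widehat{\imath}])\ge 0$.

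Second, I upgrade to all proper minors by the spectral theory of $Z$-matrices. The eigenvalue of least real part of a $Z$-matrix is real (Perron--Frobenius), so a $Z$-matrix $A$ with $\det(A+\epsilon I)>0$ for all $\epsilon\ge 0$ can have no non-positive real eigenvalue and is therefore a non-singular $M$-matrix (see \cite{berman1994nonnegative}). Applying this to each $A=M[\widehat{\imath}]$, which is a $Z$-matrix satisfying $\det(M[\widehat{\imath}]+\epsilon I)>0$ for all $\epsilon\ge 0$ by the first step, shows $M[\widehat{\imath}]$ is a non-singular $M$-matrix; hence every $\det(M[\beta])$ with $\beta\subseteq\widehat{\imath}$ is positive. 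Since each proper $\beta\subsetneq\{1,\dots,n\}$ omits some index $i$, all proper principal minors of $M$ are positive, and with $\det M\ge 0$ this is exactly (1). I expect this spectral lemma to be the only genuinely nontrivial input, as it is precisely what converts the non-strict information of ``positivity for all $F\ge 0$'' into the strict positivity of the lower-order minors.
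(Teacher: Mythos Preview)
The paper does not give its own proof of this theorem: it is quoted verbatim as \cite[Theorem~2.6]{arithmetical} and no proof environment follows. So there is nothing in the present paper to compare your argument against directly. Your proof is correct under the paper's convention $D>0\iff D\ge 0,\ D\neq 0$, which you rightly identify as essential in the step $(3)\Rightarrow(1)$: taking $D'=sE_{ii}$ is what lets you read off $\det(M[\widehat{\imath}])>0$ exactly rather than only $\ge 0$.

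For context, the paper does prove a closely related statement, Theorem~\ref{monodet}, characterising quasi non-singular $M$-matrices by the same monotonicity condition (without the clause $\det(M)\ge 0$). The overall architecture there matches yours: the forward direction is the multilinear expansion, and the converse first extracts the $(n-1)$-minors from the first partial derivatives of $g_M(X)=\det(\mathrm{Diag}(X)+M)$ and then bootstraps to all proper minors. The bootstrap differs, however. The paper argues combinatorially: it specialises variables to show each proper minor is $\ge 0$, concludes each $M[\widehat{\imath}]$ is an $M$-matrix with positive determinant, hence a non-singular $M$-matrix, and then reads off positivity of all smaller minors from that. Your bootstrap is spectral: from $\det(M[\widehat{\imath}]+\varepsilon I)>0$ for all $\varepsilon\ge 0$ you invoke Perron--Frobenius to rule out non-positive real eigenvalues and conclude $M[\widehat{\imath}]$ is a non-singular $M$-matrix. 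Both routes are short and standard; yours avoids the auxiliary ``$\ge 0$ for all minors'' step at the cost of importing the spectral characterisation from \cite{berman1994nonnegative}, while the paper's route stays entirely within the principal-minor definition.
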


The monotonicity of the determinant of an almost non-singular $M$-matrix is very important and motivates the concept of a quasi $M$-matrix, which is given next. 

%======================================%
%=Arithmetical Structures on non-negative matrices=%
%======================================%

\subsection{Quasi $M$-matrices}\label{quasi}
Here we will introduce the class of quasi $M$-matrices.
This class of matrices generalizes $M$-matrices in a very simple way.
Moreover, it has good properties that will be very useful for our algorithm.

\begin{Definition}
A real $Z$-matrix $M$ is called a quasi non-singular $M$-matrix if its proper principal minors are positive. 
And it is called a quasi $M$-matrix if all its proper principal minors are non-negative. 
\end{Definition}

Note that a quasi (non-singular) $M$-matrices are not necessarily $M$-matrices.
Moreover, the main difference between almost (non-singular) $M$-matrices and quasi (non-singular) $M$-matrices is that the determinant of the latter can be negative.
In other words, $M$ is a quasi (non-singular) $M$-matrix of size $n$ if all of its sub-matrices of size $n-1$ are (non-singular) $M$-matrices.
Quasi $M$-matrices are close to be quasi non-singular $M$-matrices in a similar way that $M$-matrices are from being non-singular $M$-matrices.
Thus the next result can be seen  as a generalization of~\cite[Lemma 4.1, Section 6]{berman1994nonnegative}. 

\begin{Theorem}
If $M$ is a real $Z$-matrix of size $n$ and $Id_n$ is the identity matrix of size $n$, then $M$ is a quasi $M$-matrix if and only if 
\[ 
M+\epsilon Id_n
\]
is a quasi non-singular $M$-matrix for any $\epsilon > 0$.
\end{Theorem}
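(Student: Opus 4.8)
The plan is to reduce the statement to a known fact about $M$-matrices applied to each proper principal submatrix, together with a limiting argument for the converse. First I would record the two elementary observations that drive everything. Since adding $\epsilon I$ only alters diagonal entries, $M+\epsilon I$ is again a Z-matrix whenever $M$ is; and for every index set $\alpha\subseteq\{1,\dots,n\}$ the principal submatrix satisfies $(M+\epsilon I)[\alpha]=M[\alpha]+\epsilon I_{|\alpha|}$, because restricting a diagonal matrix to $\alpha$ gives $\epsilon I_{|\alpha|}$. Consequently the proper principal minors of $M+\epsilon I$ are exactly the determinants $\det\!\big(M[\alpha]+\epsilon I_{|\alpha|}\big)$ with $\alpha\subsetneq\{1,\dots,n\}$, and the whole theorem becomes a statement about these quantities as $\alpha$ and $\epsilon$ vary.

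For the forward implication, suppose $M$ is a quasi $M$-matrix. I would first note that each proper principal submatrix $M[\alpha]$ is itself an $M$-matrix: it is a Z-matrix, and its principal minors are minors of $M$ indexed by subsets of $\alpha$, hence proper principal minors of $M$, which are non-negative by hypothesis. Next I would invoke the standard characterization that a Z-matrix written as $A=sI-B$ with $B\geq 0$ is an $M$-matrix precisely when $s\geq\rho(B)$ and a non-singular $M$-matrix precisely when $s>\rho(B)$, where $\rho(B)$ denotes the spectral radius of $B$ (see \cite{berman1994nonnegative}). Writing $M[\alpha]=sI-B$ gives $s\geq\rho(B)$, whence $M[\alpha]+\epsilon I=(s+\epsilon)I-B$ satisfies $s+\epsilon>\rho(B)$ and is therefore a non-singular $M$-matrix for every $\epsilon>0$. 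Since this holds for every proper $\alpha$, all proper principal submatrices of $M+\epsilon I$ are non-singular $M$-matrices, i.e. $M+\epsilon I$ is a quasi non-singular $M$-matrix.

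For the converse, I would assume $M+\epsilon I$ is a quasi non-singular $M$-matrix for every $\epsilon>0$; in particular $M$ is a Z-matrix. Fixing a proper index set $\alpha$, one has $\det\!\big(M[\alpha]+\epsilon I_{|\alpha|}\big)>0$ for all $\epsilon>0$, and since this determinant is a polynomial in $\epsilon$, hence continuous, letting $\epsilon\to 0^{+}$ yields $\det\!\big(M[\alpha]\big)\geq 0$. As $\alpha$ was an arbitrary proper index set, every proper principal minor of $M$ is non-negative, so $M$ is a quasi $M$-matrix.

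I expect the only genuine obstacle to be the strictness in the forward direction, namely passing from the non-negativity of the proper minors of $M$ to the strict positivity of those of $M+\epsilon I$. This is exactly where the $M$-matrix structure is essential: for a singular (in particular reducible) proper submatrix one sits in the boundary case $s=\rho(B)$, and it is the strict inequality $s+\epsilon>\rho(B)$ supplied by Perron--Frobenius theory that forces non-singularity. The converse, by contrast, is a soft continuity argument and should present no difficulty.
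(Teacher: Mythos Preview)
Your proof is correct and follows essentially the same strategy as the paper: both reduce the statement to the classical fact that a Z-matrix $A$ is an $M$-matrix if and only if $A+\epsilon I$ is a non-singular $M$-matrix for every $\epsilon>0$, applied to the proper principal submatrices of $M$. The paper cites this fact as \cite[Lemma~4.1, Section~6]{berman1994nonnegative} and applies it to the $(n-1)\times(n-1)$ submatrices $M_i$, whereas you unpack it via the spectral-radius characterization for the forward direction and replace the converse appeal to the lemma by a direct continuity argument; these are cosmetic rather than substantive differences.
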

\begin{proof}
$(\Rightarrow)$ Let $M_i$ be the submatrix resulting of deleting the $i^{th}$ row and column. 
Since $M$ is a quasi $M$-matrix we know that $M_i$ is an $M$-matrix for all $i$. 
Then, by \cite[Lemma 4.1, Section 6]{berman1994nonnegative}, every $M_i+\epsilon Id_{n-1}=(M+\epsilon Id_{n})_i$ is a non-singular $M$-matrix for any $\epsilon>0$.
Thus, for any positive $\epsilon$, the matrix $M+\epsilon Id_n$ is a $Z$-matrix with all of its proper sub-matrices non-singular $M$-matrices.
That is, $M+\epsilon Id_n$ is a quasi non-singular $M$-matrix for any $\epsilon>0$.

$(\Leftarrow)$ Conversely, if $M+\epsilon Id_n$ is a quasi non-singular $M$-matrix for all $\epsilon>0$, then $(M+\epsilon Id_n)_i$ is a non-singular $M$-matrix for all $i$ and $\epsilon>0$.
Thus, by~\cite[Lemma 4.1, Section 6]{berman1994nonnegative} $M_i$ is an $M$-matrix and therefore $M$ is a quasi $M$-matrix.
\end{proof}

Before continuing we will  fix some notation. 
If $M$ is a matrix, let $f_M(X)$ be the polynomial given by $\det(\textrm{Diag}(X)+M)$, where $X$ is the vector of variables $(x_1,x_2,\ldots,x_n)$. 

The next result is a key component of algorithms~\ref{A1} and~\ref{AA} given at next section and a generalization of Theorem~\ref{almostequiv} to quasi non-singular $M$-matrices.

\begin{Theorem}\label{monodet}
If $M$ is a real $Z$-matrix, then $M$ is a quasi non-singular $M$-matrix if and only if 
\[\det(M+D) > \det(M+D^{'}) > \det(M)\]
for every diagonal matrices such that $D>D^{'} > 0$.
\end{Theorem}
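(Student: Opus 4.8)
The plan is to reduce everything to the behaviour of the auxiliary polynomial $g_M(X) = \det(\text{Diag}(X) + M)$ and its first partials, via the identity $\frac{\partial g_M}{\partial x_s} = g_{M_s}(X_{|s})$ from Proposition~\ref{DerivateMinors}. Recall that, by definition, $M$ is a quasi non-singular $M$-matrix exactly when each principal submatrix $M_s$ of size $n-1$ is a non-singular $M$-matrix, equivalently when every proper principal minor of $M$ is positive; since every proper principal submatrix of $M$ sits inside some $M_s$, it is enough to control the matrices $M_s$. I will use two standard facts from $M$-matrix theory (see \cite{berman1994nonnegative}): adding a nonnegative diagonal to a non-singular $M$-matrix again yields a non-singular $M$-matrix, hence keeps its determinant positive; and a Z-matrix $A$ with $\det(A + \lambda I) > 0$ for all $\lambda \ge 0$ is a non-singular $M$-matrix (this last step going through the Perron--Frobenius eigenvalue of $B = sI - A$).

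For the direct implication, assume $M$ is a quasi non-singular $M$-matrix, so each $M_s$ is a non-singular $M$-matrix. By the first standard fact (equivalently by Theorem~\ref{almostequiv}, since a non-singular $M$-matrix is almost non-singular), $\frac{\partial g_M}{\partial x_s}(\mathbf{x}) = \det(\text{Diag}(\mathbf{x}_{|s}) + M_s) > 0$ whenever $\mathbf{x}_{|s} \ge \mathbf{0}$; thus all partials of $g_M$ are strictly positive on the nonnegative orthant. Given $D = \text{Diag}(\mathbf{t})$ and $D' = \text{Diag}(\mathbf{t}')$ with $\mathbf{t} > \mathbf{t}' > \mathbf{0}$, I integrate $g_M$ along a monotone coordinate path from $\mathbf{0}$ to $\mathbf{t}'$ and then from $\mathbf{t}'$ to $\mathbf{t}$. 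Each path stays in the nonnegative orthant and strictly increases at least one coordinate, so positivity of the partials gives $g_M(\mathbf{0}) < g_M(\mathbf{t}') < g_M(\mathbf{t})$, which is exactly $\det(M) < \det(M+D') < \det(M+D)$.

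For the converse, assume the chain of inequalities holds for all diagonal $D > D' > 0$. Fix $s$ and an arbitrary nonnegative vector $\mathbf{w}$ on the index set $(n)\setminus\{s\}$. Set $D' = \text{Diag}(\mathbf{v})$ with $\mathbf{v}_{|s} = \mathbf{w}$ and $v_s = 1$ (so $D' > 0$), and let $D = D' + \delta\, e_s e_s^t$ with $\delta > 0$, so $D > D' > 0$. Since the determinant is affine-linear in the $(s,s)$ entry, the hypothesis $\det(M+D) > \det(M+D')$ reads $\delta\,\det(M_s + \text{Diag}(\mathbf{w})) > 0$, hence $\det(M_s + \text{Diag}(\mathbf{w})) > 0$. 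As $\mathbf{w}$ was arbitrary, specializing to $\mathbf{w} = \lambda\mathbf{1}$ yields $\det(M_s + \lambda I) > 0$ for every $\lambda \ge 0$, and by the second standard fact $M_s$ is a non-singular $M$-matrix. This holding for all $s$, every proper principal minor of $M$ is positive, so $M$ is a quasi non-singular $M$-matrix.

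I expect the main obstacle to be the last step of the converse, upgrading ``$\det(M_s + \lambda I) > 0$ for all $\lambda \ge 0$'' to ``$M_s$ is a non-singular $M$-matrix'': this is where the Z-matrix hypothesis and Perron--Frobenius genuinely enter, rather than purely formal determinant manipulation. A secondary point needing care is the precise reading of the strict orders $D > D' > 0$ (that is, $D \ge D'$, $D \neq D'$ and $D' \ge \mathbf{0}$, $D' \neq \mathbf{0}$), which must be tracked so that the extracted inequalities stay strict and so that the integration paths in the direct implication remain inside the region where the partials are positive.
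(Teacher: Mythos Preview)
Your proof is correct. The forward direction is essentially the paper's argument rephrased: the paper also adds elementary diagonal blocks one at a time and observes that each addition increases the determinant by a positive proper principal minor, which is exactly your partial-derivative/path-integration statement via Proposition~\ref{DerivateMinors}.

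The converse is where you genuinely differ. The paper argues entirely inside the polynomial $g_M(X) = \sum_{I\subseteq (n)} \det(M[I;I])\, x_{I^c}$: monotonicity on the nonnegative orthant forces each first partial to be positive there, so the size-$(n{-}1)$ minors (the partials at the origin) are positive; smaller minors $\det(M[J;J])$ are then shown nonnegative by a leading-coefficient argument along the ray $y_i = x$ for $i\notin J$, $y_i = 0$ for $i\in J$, and finally upgraded to strictly positive by observing that each $M[I;I]$ with $|I| = n-1$ is a Z-matrix with nonnegative principal minors and positive determinant, hence a non-singular $M$-matrix, so all of its principal minors are positive. You instead extract, via affine-linearity in the $(s,s)$ entry, that $\det(M_s + \mathrm{Diag}(\mathbf{w})) > 0$ for every nonnegative $\mathbf{w}$, and then invoke the spectral (Perron--Frobenius) characterization of non-singular $M$-matrices on $M_s$. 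Your route is shorter and sidesteps the paper's two-step bootstrap, at the price of importing the spectral fact from \cite{berman1994nonnegative}; the paper's route stays entirely within determinantal/polynomial identities. Either way, both arguments hinge on the same structural point: positivity of $\partial g_M/\partial x_s$ on the nonnegative orthant is equivalent to each $M_s$ being a non-singular $M$-matrix.
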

\begin{proof}
Let $D>D^{'}>0$ be diagonal matrices, $E_i=(e_{j,k})$ be the elementary matrix with $e_{i,i}=1$ and $e_{j,k}=0$ for all $(j,k)\neq (i,i)$ and $M_{\epsilon}=M+\epsilon E_i$.

$(\Rightarrow)$ Since $M+D$ is a quasi non-singular $M$-matrix, then
\[
\det(M_{\epsilon} [I;I])= \det(M[I;I])+ \epsilon \cdot \det(M[I \setminus i; I \setminus i]) > \det(M[I;I])
\]
for all $\epsilon>0$.
Thus, since $D=\sum_{i=1}^{n}d_i \cdot E_i$ for some $d_i \in \mathbb{R}_+$, 
\[
\det(M+D)>\det(M).
\]
Moreover, using similar arguments it can be proven that $\det(M+D^{'}+F) > \det(M+D^{'})$ for any diagonal matrix $F>0$. 
Finally, taking $F=D-D^{'}>0$ we get the result.

$(\Leftarrow)$
By hypothesis $f_M(X):=\det(\textrm{Diag}(X)+M)$ is a strictly increasing function on $(\mathbb{R}_+ \cup \{ 0\} )^n$ and we need to prove that $M$ is a quasi non-singular $M$-matrix, that is, $\det(M[J;J]) > 0$ for every $J\subsetneq [n]$. 
For all $J\subseteq [n]$, let 
\[
p_J(X_J):=f_M(X)|_{\{x_j=0\}_{j\notin J}}=\sum_{I \subseteq J} \det(M[I^c;I^c])\cdot x_{I} \text{ where } x_I=\prod_{i\in I}x_i \text{ for all } I\subseteq [n].
\]
We recall that $\det(M[\emptyset;\emptyset])=1$, $p_{[n]}=f_M(X)$ and $X_J=\{x_j: i\in J\}$.
Thus, if $J \neq \emptyset$, then $p_J$ is a strictly increasing function on $(\mathbb{R}_+\cup\{0\})^{J}$ with leading coefficient equal to $\det(M[J^c;J^c])$.
Now, if $p_J(x)=p_J(X_J)|_{\{x_j=x\}_{j\notin J}}$, then $dp_J(x)/dx|_{x=a}\geq 0$ for all $a\in \mathbb{R}_+\cup\{0\}$ and therefore its leading coefficient $\det(M[J^c;J^c])$ is non-negative for all $\emptyset \neq J\subseteq [n]$.
Thus, by~\cite[6.4.6 ($A_1$)]{berman1994nonnegative}, $M[J^c;J^c]$ is an $M$-matrix for all $\emptyset \neq J\subseteq [n]$.

Moreover, if $i\in [n]$ and $I=[n]\setminus i$, then
$p_i(x_i)=\det(M[I;I])x_i+\det(M)$ and therefore $\det(M[I;I])>0$ for all $i\in [n]$ otherwise $f_M(X)$ would not be strictly increasing.
Finally, since $M[I;I]$ is non-singular, by~\cite[6.2.3 ($A_1$)]{berman1994nonnegative} all its principal minors are positive.
\end{proof}
	
%==========================================%
%===================The algorithm==========%
%==========================================%

\section{The algorithm}\label{algo}
This section contains the main result of this article, an algorithm that computes all the arithmetical structures on a non-negative matrix with zero diagonal.
Before presenting the algorithm let us fix some notation.
Let $\mathbf{d}\in \mathbb{N}_+^n$, $X=(x_1,x_2,\ldots,x_n) $ and 
\[
f_{L,\mathbf{d}} (X)= \det (\mathnormal{Diag}(X+\mathbf{d})-L ).
\]
For simplicity we write $f_L (X)$ instead of $f_{L,\mathbf{0}} (X)$.
Now, let $\mathrm{coef}_{L,\mathbf{d}}(x^{a})$ be the coefficient of the monomial $\bff{x}^a=x_1^{a_1}\cdots\, x_n^{a_n}$ in $f_{L,\mathbf{d}}(X)$. 
The constant term of $f_{L,\mathbf{d}}(X)$ is equal to $\mathrm{coef}_{L,\mathbf{d}}(\bff{x}^{\bff{0}})=f_{L,\mathbf{d}}(\mathbf{0})$, which will be denoted by $c_{L,\mathbf{d}}$.
The coefficients of $f_{L,\mathbf{d}} (X)$ that are not the constant term are called \textit{nonconstant coefficients}.
Note that the coefficients of the polynomial $f_{L,\mathbf{d}}(X)$ are in correspondence with the principal minors of $\text{Diag}(\mathbf{d})-L$.
Thus, by inspecting the polynomial $f_{L,\mathbf{d}} (X)$ we can infer what type of $M$-matrix is $\text{Diag}(\mathbf{d})-L$. 
In a similar manner, $\mathbf{d}$ is a d-arithmetical structure of $L$ if and only if $c_{L,\mathbf{d}}=0$ and the nonconstant coefficients of $f_{L,\mathbf{d}}(X)$ are positive.
Note that the condition $c_{L,\mathbf{d}}=0$ is not enough to guarantee that a vector being a $d$-arithmetical structure, as the vector $r$ is not necessarily positive.

Recall that $\mathcal{D}_{\geq 0}(L)$ was defined as the set of $\bff{d}\in\mathbb{N}_+^n$ such that $\text{Diag}(\mathbf{d})-L$ is an almost non-singular $M$-matrix. 
Equivalently $\mathcal{D}_{\geq 0}(L)$ is the set of vectors $\mathbf{d}\in \mathbb{N}^n_+$ such that all nonconstant coefficients of $f_{L,\bff{d}}(X)$ are positive and $c_{L,\bff{d}}\geq 0$.
Thus, by Theorem~\ref{monodet} the problem of getting an almost non-singular $M$-matrix from a quasi non-singular$M$-matrix by adding a positive vector to the diagonal is similar to the knapsack problem, see for instance~\cite{mochila} for an extensive study of the knapsack problem. 

If $M$ is a quasi non-singular $M$-matrix, let 
\[
\mathcal{C}(M)=\{\mathbf{d}\in \mathbb{N}_+^n\, |\ (\textrm{Diag}(\bff{d})+M) \text{ is an almost non-singular } M\text{-matrix}\}.
\] 
It is not difficult to check that $\mathcal{C}(M)$ exists and is finite by Dickson's Lemma. 
Now let $\min \mathcal{D}_{\geq 0}(L)$ be the set of all minimal elements of $\mathcal{D}_{\geq 0}(L)$, $L_s$ is the submatrix of $L$ that results from removing the $s$-th row and column. 
Also, for any $\mathbf{d}\in \mathbb{Z}^{n-1}$ and $1\leq s\leq n$, let $\mathbf{d}^{(s)}\in \mathbb{Z}^{n}$ be given by
\begin{equation} \label{expand}
(\mathbf{d}^{(s)})_i=
\begin{cases}
\mathbf{d}_i & \text{if } 1\leq i< s,\\
1 & \text{    if } i=s,\\
\mathbf{d}_{i-1} & \text{ if } s < i \leq n.
\end{cases}
\end{equation}

Before presenting our first algorithm let us address the smaller case in the following result.
\begin{Lemma}\label{baselemma1}
If $a,b\in\mathbb{N}$, then
\begin{equation} \label{eq:bcase}
\min \mathcal{D}_{\geq 0}\left( \begin{matrix}
0 & a\\
b & 0
\end{matrix} \right)= \min \left\{ \Big( d,\max\big( 1,\Big\lceil \dfrac{ab}{d} \Big\rceil \big) \Big)\, \Big|\, d\in\mathbb{N}_+, d\leq \max(1,ab)  \right\},
\end{equation}
\end{Lemma}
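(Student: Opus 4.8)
The plan is to make the set $\mathcal{D}_{\geq 0}(L)$ completely explicit in this $2\times 2$ situation and then carry out an elementary analysis of its minimal elements under the componentwise order, using the characterization of $\mathcal{D}_{\geq 0}(L)$ in terms of the coefficients of $f_{L,\mathbf{d}}$ recalled just above the lemma.

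First I would write down the relevant polynomial. For $L=\begin{pmatrix}0 & a\\ b & 0\end{pmatrix}$ and $\mathbf{d}=(d_1,d_2)$,
\[
f_{L,\mathbf{d}}(X)=\det\begin{pmatrix} x_1+d_1 & -a\\ -b & x_2+d_2\end{pmatrix}= x_1x_2+d_2x_1+d_1x_2+(d_1d_2-ab).
\]
By the coefficient characterization, $\mathbf{d}\in\mathcal{D}_{\geq 0}(L)$ iff all proper coefficients of $f_{L,\mathbf{d}}$ are positive and the independent term $c_{L,\mathbf{d}}=d_1d_2-ab$ is non-negative. The proper coefficients here are $1,d_2,d_1$, which are automatically positive for $\mathbf{d}\in\mathbb{N}_+^2$ (they are exactly the proper principal minors of $\text{Diag}(\mathbf{d})-L$). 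Hence the almost non-singular $M$-matrix condition reduces to a single inequality:
\[
\mathcal{D}_{\geq 0}(L)=\{(d_1,d_2)\in\mathbb{N}_+^2 \mid d_1d_2\geq ab\}.
\]

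Next I would analyze minimality. Fixing the first coordinate $d_1=d\geq 1$, the constraint $d_1d_2\geq ab$ forces $d_2\geq ab/d$, so the smallest admissible second coordinate is $\lceil ab/d\rceil$; together with $d_2\geq 1$ it equals $\max(1,\lceil ab/d\rceil)$, and I set $p(d)=(d,\max(1,\lceil ab/d\rceil))$. Each $p(d)$ lies in $\mathcal{D}_{\geq 0}(L)$, since $d\cdot\lceil ab/d\rceil\geq d\cdot(ab/d)=ab$. Conversely, if $(d_1,d_2)$ is a minimal element, then $d_2$ cannot strictly exceed this smallest admissible value (otherwise $p(d_1)<(d_1,d_2)$ would still lie in the set), so $(d_1,d_2)=p(d_1)$; and one must have $d_1\leq\max(1,ab)$, because otherwise $p(d_1)=(d_1,1)$ would be strictly dominated by $(\max(1,ab),1)\in\mathcal{D}_{\geq 0}(L)$. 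Thus every minimal element lies in the candidate set
\[
B=\{p(d)\mid d\in\mathbb{N}_+,\ d\leq\max(1,ab)\},
\]
which is precisely the set appearing inside the $\min\{\cdots\}$ on the right-hand side of \eqref{eq:bcase}.

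Finally I would conclude using the general fact that whenever $B\subseteq A\subseteq\mathbb{N}^n$ and $\min A\subseteq B$, one has $\min A=\min B$ (the reverse inclusion uses that $A$ is well-founded by Dickson's Lemma, so any element dominated in $A$ is dominated by an element of $\min A\subseteq B$). Applying this with $A=\mathcal{D}_{\geq 0}(L)$ gives exactly $\min\mathcal{D}_{\geq 0}(L)=\min B$, which is the claimed identity. The degenerate case $ab=0$ is subsumed automatically: then $\mathcal{D}_{\geq 0}(L)=\mathbb{N}_+^2$, $\max(1,ab)=1$, and both sides collapse to $\{(1,1)\}$, which is precisely why the outer truncations $\max(1,\cdot)$ are present. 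I do not expect any serious obstacle; the only points requiring care are the correct handling of the degenerate product $ab=0$ and the verification that no minimal element has first coordinate exceeding $ab$, both governed by the $\max(1,\cdot)$ truncations.
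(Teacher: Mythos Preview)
Your argument is correct and follows the same approach as the paper: both identify $\mathcal{D}_{\geq 0}(L)=\{(d_1,d_2)\in\mathbb{N}_+^2\mid d_1d_2\geq ab\}$ as the key reduction. The paper's own proof is a single sentence recording this observation, whereas you supply the remaining details (the explicit fiberwise minimization, the bound $d_1\leq\max(1,ab)$, the $\min A=\min B$ lemma, and the degenerate case $ab=0$) that the paper leaves implicit.
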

\begin{proof}
This result is straightforward. 
Given a vector $(d_1,d_2)\in\mathbb{N}_+^2$, the only condition needed so that $(d_1,d_2)\in\min \mathcal{D}_{\geq 0}\Big( \begin{matrix}
0 & a\\
b & 0
\end{matrix} \Big)$, is that $d_1d_2\geq ab$.
\end{proof}
Note that this is the base case of the recursive algorithm that follows next.

\begin{Algorithm}\label{A1}
\mbox{}
\noindent\hrulefill 

\textbf{Input}: A non-negative square matrix $L$ of size $n$ with zero diagonal.

\textbf{Output}: $\min \mathcal{D}_{\geq 0}(L)$.

\begin{enumerate}[\hspace{1pt}(1)]
\item Compute $\tilde{A}_s=\min \mathcal{D}_{\geq 0}(L_s)$ for all $1\leq s\leq n$.
\vspace{0.7mm}
\item Let $A_s=\{\mathbf{\tilde{d}}^{(s)}\,|\,\mathbf{\tilde{d}}\in\,\tilde{A}_s\}$.
\vspace{1mm}
\item \textbf{For} $\boldsymbol{\delta}$ in $\prod_{s\in[n]}A_s$:
\vspace{1.4mm}
\item \hspace{3mm} $\mathbf{d}=\sup\big\{\boldsymbol{\delta}_1,\ldots, \boldsymbol{\delta}_n\big\}$.
\vspace{1.1mm}
\item \hspace{3mm} Let $S=\{s\ |\ \mathrm{coef}_{L,\mathbf{d}}(x_s) = 0 \}$
\item \hspace{3mm} $\textrm{Find}(L,\mathbf{d},S)$:
\item \hspace{6.5mm} \textbf{If} $|S| \geq 1$:
\item \hspace{10mm}
\textbf{For} $t\notin S$:
\item \hspace{13.5mm} Make $\mathbf{d}_t^{'}=\mathbf{d}_t+1$, $\mathbf{d}^{'}_r =\mathbf{d}_r$ for all $r\in [n]\setminus \{t\}$ and $\textrm{Find}(L,\mathbf{d}^{'},\emptyset)$.
\item \hspace{10mm} \textbf{If} $|S|\geq 2$:
\item \hspace{13.5mm}
\textbf{For} $s_1,s_2\in S$ ($s_1\neq s_2$):
\item \hspace{17mm}
Make $\mathbf{d}^{'}_{s_1}=\mathbf{d}_{s_1}+1$, $\mathbf{d}^{'}_{s_2}=\mathbf{d}_{s_2}+1$, $\bff{d}^{'}_r=\bff{d}_r$ for all $r\in [n]\setminus \{s_1,s_2\}$ and $\textrm{Find}(L,\mathbf{d}^{'},\emptyset)$. 

\item \hspace{6.5mm} \textbf{For} $\mathbf{d^{*}} \in \min \mathcal{C}(\mathrm{Diag}(\mathbf{d})-L)$: 
\item \hspace{10mm} ``Add" $\mathbf{d^{*}}+\mathbf{d}$ to $\min\mathcal{D}_{\geq 0}(L)$.
\item Return $\min \mathcal{D}_{\geq 0}(L)$.
\end{enumerate}%k=0
\noindent\hrulefill 
\end{Algorithm}

The vector at step (4) is the (unique) minimal vector greater or equal than $\boldsymbol{\delta}_i$ for every $i\in[n]$. At step $(6)$ we find all minimal vectors greater than $\mathbf{d}$ such that all coefficients of $f_{L,\mathbf{d}}(X)$ are positive except, maybe, for the constant term which is possibly zero.
The function ``add" at step $(14)$ means that we add the vector $\mathbf{d^{*}}+\mathbf{d}$ to the set $\min\mathcal{D}_{\geq 0}(L)$ whenever it is not greater than other vector already in the set.
Afterwards, by erasing every vector greater than $\mathbf{d^{*}}+\mathbf{d}$ from the set, the minimality of the set is assured.
 Now, we are ready to prove the correctness of the Algorithm~\ref{A1}.

\begin{Theorem}\label{correctA1}
Algorithm~\ref{A1} computes the set $\min\mathcal{D}_{\geq 0}(L)$ for any given non-negative matrix $L$ with zero diagonal.
\end{Theorem}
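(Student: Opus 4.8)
The plan is to argue by induction on the size $n$ of $L$, taking the base case $n=2$ from Lemma~\ref{baselemma1} (the case $n=1$ being trivial, since $\min\mathcal{D}_{\geq 0}((0))=\{1\}$). Throughout I would work with the dictionary between $\mathcal{D}_{\geq 0}(L)$ and the coefficients of $f_{L,\mathbf{d}}(X)$: a vector $\mathbf{d}$ lies in $\mathcal{D}_{\geq 0}(L)$ exactly when every proper coefficient of $f_{L,\mathbf{d}}$ is positive and $c_{L,\mathbf{d}}\geq 0$, and by the minor expansion underlying Proposition~\ref{DerivateMinors} the linear coefficient $\mathrm{coef}_{L,\mathbf{d}}(x_s)$ equals the principal minor $\det(\text{Diag}(\mathbf{d}_{|s})-L_s)$, which depends only on the entries $\mathbf{d}[j]$ with $j\neq s$. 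This last fact is what ties the recursive calls $\tilde{A}_s=\min\mathcal{D}_{\geq 0}(L_s)$ to the linear part of $f_{L,\mathbf{d}}$, and it is the backbone of both directions.

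For completeness I would take an arbitrary $\mathbf{d}\in\min\mathcal{D}_{\geq 0}(L)$ and show it is produced. Since $\text{Diag}(\mathbf{d})-L$ is an almost non-singular $M$-matrix, each $(n-1)$-dimensional principal submatrix $\text{Diag}(\mathbf{d}_{|s})-L_s$ is a non-singular $M$-matrix, so $\mathbf{d}_{|s}\in\mathcal{D}_{\geq 0}(L_s)$ and hence dominates some minimal $\tilde{d}^{(s)}\in\tilde{A}_s$. Using the lift~(\ref{expand}) and $\mathbf{d}[s]\geq 1$, each lift satisfies $\tilde{d}^{(s)}_s\leq\mathbf{d}$, so for the tuple $\delta=(\tilde{d}^{(1)}_1,\ldots,\tilde{d}^{(n)}_n)\in\prod_s A_s$ the vector $\mathbf{d}_{\mathrm{alg}}=\max_s\tilde{d}^{(s)}_s$ of step~(4) obeys $\mathbf{d}_{\mathrm{alg}}\leq\mathbf{d}$. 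Moreover $(\mathbf{d}_{\mathrm{alg}})_{|s}\geq\tilde{d}^{(s)}$ for every $s$, so by the monotonicity of the minors on the quasi non-singular region (Theorem~\ref{monodet}) every $\mathrm{coef}_{L,\mathbf{d}_{\mathrm{alg}}}(x_s)$ is non-negative, and the set $S$ of step~(5) records precisely the indices where it still vanishes.

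The heart of the argument is to show that $Find$, started at $\mathbf{d}_{\mathrm{alg}}$, enumerates exactly the minimal vectors $\geq\mathbf{d}_{\mathrm{alg}}$ at which all linear coefficients become positive, i.e. the minimal elements of the up-set $\mathcal{Q}(L)=\{\mathbf{d}\,|\,\text{Diag}(\mathbf{d})-L \text{ is a quasi non-singular } M\text{-matrix}\}$ lying above $\mathbf{d}_{\mathrm{alg}}$ (an up-set by Theorem~\ref{monodet}, finite by Dickson's Lemma). The mechanism again rests on Theorem~\ref{monodet}: incrementing $\mathbf{d}[t]$ raises every minor $\det(\text{Diag}(\mathbf{d}_{|s})-L_s)$ with $t\neq s$ but leaves the one with $s=t$ fixed. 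This is what forces the case split of steps~(8)--(12), since when $|S|\geq 2$ raising the coordinates indexed by $S$ cross-raises all the vanishing minors simultaneously, whereas when $|S|=1$ the lone vanishing minor $\det(\text{Diag}(\mathbf{d}_{|s})-L_s)$ is independent of $\mathbf{d}[s]$ and must be cured by a coordinate outside $S$; termination is guaranteed by Remark~\ref{remark2}. Once a minimal $\mathbf{d}^{*}\in\mathcal{Q}(L)$ with $\mathbf{d}^{*}\leq\mathbf{d}$ is reached, the determinant $c_{L,\mathbf{d}}$ is strictly increasing along the diagonal (Theorem~\ref{monodet}), so the minimal diagonal additions making it non-negative are exactly $\min\mathcal{C}(\text{Diag}(\mathbf{d}^{*})-L)$; step~(14) then adds $\mathbf{d}^{*}+\mathbf{d}'$, and by minimality of $\mathbf{d}$ one of these coincides with $\mathbf{d}$.

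For soundness and minimality I would note that every vector stored has the form $\mathbf{d}^{*}+\mathbf{d}'$ with $\mathbf{d}'\in\mathcal{C}(\text{Diag}(\mathbf{d}^{*})-L)$, hence $\text{Diag}(\mathbf{d}^{*}+\mathbf{d}')-L$ is an almost non-singular $M$-matrix and $\mathbf{d}^{*}+\mathbf{d}'\in\mathcal{D}_{\geq 0}(L)$, while the guarded ``add'' of step~(14) keeps the stored set an antichain by discarding dominated candidates and deleting stored elements that dominate a newcomer; the returned set is therefore exactly $\min\mathcal{D}_{\geq 0}(L)$. I expect the main obstacle to be precisely the correctness of $Find$ in steps~(7)--(12): one must prove that the recursive increment scheme visits every minimal point of $\mathcal{Q}(L)$ above $\mathbf{d}_{\mathrm{alg}}$ and produces no vector lying strictly below such a point, which is the one place where the combinatorics of the vanishing-minor set $S$ enters in an essential way.
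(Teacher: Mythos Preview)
Your approach coincides with the paper's: induction on $n$ with the $n=2$ base case handled by Lemma~\ref{baselemma1}, and the core reachability step showing that any $\mathbf{d}\in\min\mathcal{D}_{\geq 0}(L)$ satisfies $\mathbf{d}_{|s}\in\mathcal{D}_{\geq 0}(L_s)$ for each $s$, hence dominates some $\max_s\tilde{\mathbf{d}}_s$ arising at step~(4). The paper's own proof is in fact terser than yours on steps~(5)--(14): it simply asserts that ``by~\ref{monodet} every vector $\mathbf{d}\geq\mathbf{d}'$ such that $(\text{Diag}(\mathbf{d})-L)$ is an almost non-singular $M$-matrix is reached or found on steps~(5) to~(14),'' so your discussion of the cross-raising mechanism, the role of $\mathcal{C}$, and the antichain maintenance in step~(14) already goes beyond what the paper supplies.

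One point worth double-checking against the algorithm text: in your explanation of $Find$ you say that when $|S|=1$ one must increment a coordinate \emph{outside} $S$ (since the lone vanishing $(n{-}1)$-minor does not depend on $\mathbf{d}[s]$), while when $|S|\geq 2$ one raises coordinates \emph{inside} $S$ to cross-cure the zeros. The algorithm as printed has this the other way around---step~(9) increments $\mathbf{d}[s]$ for $s\in S$ when $k=1$, and only step~(12) in the $k\geq 2$ branch touches coordinates outside $S$. Your reasoning about which coordinate changes affect a given minor is correct, so the discrepancy may reflect an infelicity in the algorithm's presentation rather than an error on your part; since the paper's proof does not unpack $Find$ at all, this does not create a gap relative to the published argument, but you should reconcile it before attempting the fully detailed verification you flag as the main obstacle.
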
	
\begin{proof}
We proceed by induction on the size of $L$. 
First, the case when $L$ is a matrix of size $2$ is solved by Equation (\ref{eq:bcase}).
Second, assume that the algorithm is correct for every matrix of size $1\leq m\leq n-1$ and let $L$ be a non-negative matrix with zero diagonal of size $n$.

Note that Algorithm~\ref{A1} can be split into three phases. 
The first phase, consisting of steps (1) to (4), creates by our induction hypothesis a set of vectors, say $\Delta$, such that for all $\bff{d}\in \Delta$ every proper principal minor of $\text{Diag}(\bff{d}) - L$ of size strictly less than $n - 1$ is positive. 
Moreover, every principal minor of $\text{Diag}(\bff{d}) - L$ of size $n-1$ is non-negative.

The second phase is encoded in steps (5) to (12), takes each vector $\mathbf{d}\in\Delta$ and increases some of their entries resulting in a new set of vectors, let us denote this set by $\Delta^{'}$. 
The vectors $\bff{d}^{'}\in\Delta^{'}$ are constructed such that every proper principal minors of $\text{Diag}(\bff{d}^{'}) - L$ are positive. 
For this, we begin at step (5) by identifying the indexes such that their corresponding minors of size $n-1$ are zero, that is, the set $S=\{s\ |\ \mathrm{coef}_{L,\mathbf{d}}(x_s) = 0 \}$. 
Note that if $S=\emptyset$ then $\mathbf{d}$ fulfills the desired property. 
Otherwise, if $|S|<n$ then by setting $\mathbf{d}^{'}_t=\mathbf{d}_t+1$ for some $t\notin S$ and $\mathbf{d}^{'}_r=\mathbf{d}_r$ for all $t\neq r\in [n]$, we have that the resulting vector $\bff{d}^{'}$ also fulfills the property because for every $s\in S$ the corresponding minor of size $n-1$ of $\text{Diag}(\bff{d}^{'}) - L$ is positive by Theorem~\ref{almostequiv}.
Also, if $|S|\geq 2$, as in step (11), let $s_1,s_2\in S$ and $s_1\neq s_2$. If we set $\mathbf{d}^{'}_{s_1}=\mathbf{d}_{s_1}+1$, $\mathbf{d}^{'}_{s_2}=\mathbf{d}_{s_2}+1$ and $\mathbf{d}^{'}_r=\mathbf{d}_r$ for all $ r\in [n]\setminus \{s_1,s_2\}$. 
Then, similarly, we have that the proper principal minors of $\text{Diag}(\bff{d}^{'}) - L$ are positive by Theorem~\ref{almostequiv}.

Finally, we will prove that in steps (13), (14) and (15), the algorithm increases the vectors in $\Delta^{'}$ further so that we get all the almost non-singular M-matrices in $\min\mathcal{D}_{\geq 0} (L)$.
Let us note that if $\mathbf{d}^{'}$ is a vector given by steps (1)-(12), then by both Theorem~\ref{monodet} and the definition of $\mathcal{C}(M)$, every vector $\mathbf{f}\geq\mathbf{d}^{'}$ such that $(\text{Diag}(\mathbf{f})-L)$ is an almost non-singular $M$-matrix will be reached on steps $(13)$ and $(14)$. 
Therefore we only need to prove that every $\mathbf{f}\in\min\mathcal{D}_{\geq 0}(L)$ is reached by some vector constructed in steps (1)-(12).

In order to prove this, for every $\mathbf{f}\in\mathcal{D}_{\geq 0}(L)$, let $\mathbf{f}_{|s}$ be the vector equal to $\mathbf{f}$ without the $s$-th entry.
That is,
\[
(\mathbf{f}_{|s})_i=\begin{cases}
\mathbf{f}_i, & \text{ if }1\leq i\leq s-1,\\
\mathbf{f}_{i+1}, & \text{ if }s\leq i\leq n-1.
\end{cases}
\]
Then for every $s\in[n]$, we have that $\mathbf{f}_{|s}\in\mathcal{D}_{\geq 0}(L_s)$ and there exists $\mathbf{\tilde{f}}\in\min\mathcal{D}_{\geq 0}(L_s)$ such that $\mathbf{\tilde{f}}\leq \mathbf{f}_{|s}$. 
Consequently, we have that 
\[
\max_{s\in [n]} \left\{ (\mathbf{\tilde{f}}^{(s)})_i \right\} \leq \mathbf{f}_i,
\]
where $\mathbf{f}^{(s)}$ is as in equation (\ref{expand}). In other words, every $\mathbf{f}\in\min\mathcal{D}_{\geq 0}(L)$ is greater or equal than a vector presented by step (4). 
Therefore let $\mathbf{f}\in\min\mathcal{D}_{\geq 0}(L)$ and let $\bff{d}\leq \bff{f}$ be such vector given at step (4). 
Then, assume that there is no vector $\bff{d}^{'}\geq \bff{d}$ given by steps (5)-(12) such that $\bff{f}\geq \bff{d}^{'}$. Note that $S=\{s\ |\ \mathrm{coef}_{L,\mathbf{d}}(x_s) = 0 \}\neq \emptyset$ and that any vector given by steps (5)-(12) can not be greater or equal than $\bff{f}$. 
Thus $\bff{f}=\bff{d}+a\bff{e_s}$ for some $a\in \mathbb{N}_+$ and some $s\in S$, where $\bff{e_s}\in \mathbb{N}^n$ is the standard unit vector with its $s$-th entry equal to $1$. 
Therefore the minor of $\text{Diag}(\bff{f})-L$ of size $n-1$ corresponding to removing the $s$-th row and column is zero, a contradiction since $\text{Diag}(\bff{f})-L$ is an almost non-singular $M$-matrix. 
Hence concluding that, indeed, there is a vector $\bff{d}^{'}\geq \bff{d}$ given by steps (5)-(12) such that $\bff{f}\geq \bff{d}^{'}$. 
\end{proof}

Note that Algorithm~\ref{A1} is not of polynomial-time because, in general, the extended knapsack problem of finding $\mathcal{C}(M)$ is not of polynomial-time. 
On the other hand, thanks to the recursive structure of the algorithm, we get rid of the need of checking the value of the $2^n-2$ proper principal minors of $L$. Now, we present the following algorithm that uses Algorithm~\ref{A1}.
	
\begin{Algorithm}\label{AA}
\mbox{}
\noindent\hrulefill 

\textbf{Input}: A non-negative integer square matrix $L$ with zero diagonal.

\textbf{Output}: $\mathcal{D}(L)$.

\begin{enumerate}[\hspace{1pt}(1)]
\item \textbf{If} (L is irreducible):
\item \hspace{3mm} $A=\min \mathcal{D}_{\geq 0}(L)$,
\item \hspace{3mm} $D=\{\mathbf{d}\in A: f_{L,{\mathbf{d}}}(\mathbf{0})=0 \}$,
\item \hspace{3mm} Return D.
\item \textbf{Elif} (L has a row equal to $\mathbf{0}$):
\item \hspace{3mm} Return $\emptyset$.
\item \textbf{Else}:
\item \hspace{3mm} $\text{ 'there is an infinite number of arithmetical structures'}$, (see Theorem~\ref{finiteness}).
\end{enumerate}
\noindent\hrulefill 
\end{Algorithm}
	
Note that if we have a $d$-arithmetical structure on a matrix $L$, then it is very simple to get the corresponding $r$-arithmetical structure. 
We only need to compute the kernel of $\textrm{Diag}(\mathbf{d})-L$. 
Now, we are able to present the correctness of Algorithm~\ref{AA}.

\begin{Corollary}\label{correctAA}
Algorithm~\ref{AA} computes the set of arithmetical structures on any non-negative integer square matrix $L$ with diagonal zero.
\end{Corollary}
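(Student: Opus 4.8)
The plan is to verify the three branches of Algorithm~\ref{AA} separately, with the only real content lying in the irreducible branch; the other two reduce immediately to results already at hand. If $L$ is reducible and has a zero row, then $\mathcal{A}(L)=\emptyset$ by Proposition~\ref{empty}, so $\mathcal{D}(L)=\emptyset$ and step~(6) is correct; if $L$ is reducible but has no zero row, then $\mathcal{A}(L)\neq\emptyset$ again by Proposition~\ref{empty}, and Theorem~\ref{finiteness} forces $\mathcal{A}(L)$ to be infinite, which is exactly what step~(8) reports. Since irreducibility of $L$ makes $\mathcal{A}(L)$ finite by Theorem~\ref{finiteness}, in the first branch it remains only to check that the returned set $D$ equals $\mathcal{D}(L)$.

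For the irreducible branch I would first record the set-theoretic identity
\[
\mathcal{D}(L)=\{\mathbf{d}\in\mathcal{D}_{\geq 0}(L)\ :\ f_{L,\mathbf{d}}(\mathbf{0})=0\}.
\]
Both inclusions come from Theorem~\ref{almostrs}: since $L$ is irreducible, the Remark after Theorem~\ref{finiteness} gives that $\textrm{Diag}(\mathbf{d})-L$ is an irreducible Z-matrix for every $\mathbf{d}\in\mathbb{N}_+^n$, and Theorem~\ref{almostrs} then says precisely that this matrix admits a positive kernel vector if and only if it is an almost non-singular $M$-matrix with zero determinant, i.e. if and only if $\mathbf{d}\in\mathcal{D}_{\geq 0}(L)$ and $f_{L,\mathbf{d}}(\mathbf{0})=\det(\textrm{Diag}(\mathbf{d})-L)=0$.

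The crux — and the step I expect to be the main obstacle — is to show that one may intersect with the \emph{minimal} set $A=\min\mathcal{D}_{\geq 0}(L)$ rather than with all of $\mathcal{D}_{\geq 0}(L)$; that is, $\mathcal{D}(L)\subseteq\min\mathcal{D}_{\geq 0}(L)$. Here I would argue by contradiction: if some $\mathbf{d}\in\mathcal{D}(L)$ were not minimal, there would be $\mathbf{d}'\in\mathcal{D}_{\geq 0}(L)$ with $\mathbf{d}'<\mathbf{d}$, so $\textrm{Diag}(\mathbf{d}')-L$ is an almost non-singular $M$-matrix, in particular a quasi non-singular $M$-matrix with $\det(\textrm{Diag}(\mathbf{d}')-L)\geq 0$. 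The delicate point is that $\mathbf{d}-\mathbf{d}'$ is only non-negative and nonzero, not strictly positive, so Theorem~\ref{almostequiv}(3) does not apply verbatim; instead I would invoke the finer monotonicity extracted from the proof of Theorem~\ref{monodet}, namely that adding a positive multiple of a single elementary matrix $E_i$ to a quasi non-singular $M$-matrix raises the determinant by that multiple times a positive proper principal minor. Raising the coordinates of $\mathbf{d}'$ up to those of $\mathbf{d}$ one at a time (the matrix staying quasi non-singular throughout, as each proper principal submatrix remains a non-singular $M$-matrix) then gives $f_{L,\mathbf{d}}(\mathbf{0})=\det(\textrm{Diag}(\mathbf{d})-L)>\det(\textrm{Diag}(\mathbf{d}')-L)\geq 0$, contradicting $f_{L,\mathbf{d}}(\mathbf{0})=0$.

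Combining the two displays yields $\mathcal{D}(L)=\{\mathbf{d}\in A:f_{L,\mathbf{d}}(\mathbf{0})=0\}=D$, where the first equality uses $\mathcal{D}(L)\subseteq A\subseteq\mathcal{D}_{\geq 0}(L)$ together with the identity above, and where the correctness of the computed set $A$ is exactly Theorem~\ref{correctA1}. This settles the irreducible branch and completes the verification of all three cases. Finally, as noted in the text, each $d$-arithmetical structure $\mathbf{d}\in\mathcal{D}(L)$ is promoted to a full arithmetical structure simply by computing a primitive generator of $\ker(\textrm{Diag}(\mathbf{d})-L)$.
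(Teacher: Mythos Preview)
Your proof is correct and follows essentially the same approach as the paper, which simply writes ``It follows directly from Proposition~\ref{empty} and Theorems~\ref{finiteness},~\ref{almostrs} and~\ref{correctA1}.'' You have carefully unpacked what that citation means, and in particular you supply a step the paper leaves implicit: the inclusion $\mathcal{D}(L)\subseteq\min\mathcal{D}_{\geq 0}(L)$, together with the observation that Theorem~\ref{almostequiv}(3) does not apply verbatim because $\mathbf{d}-\mathbf{d}'$ need not be strictly positive, so one must use the single-coordinate monotonicity from (the proof of) Theorem~\ref{monodet} instead.
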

\begin{proof}
It follows directly from Proposition~\ref{empty} and Theorems~\ref{finiteness},~\ref{almostrs} and~\ref{correctA1}.
\end{proof}

The next example illustrates how Algorithm~\ref{A1} works.
Moreover, it will give us a glance of its complexity.

\begin{Example}
Let $G$ be the graph given in Figure~\ref{fig:01} and $L$ be its adjacency matrix.

\vspace{-10mm}

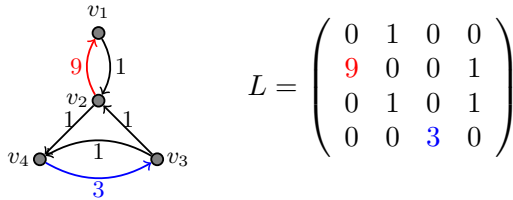
\begin{figure}[h]

\hspace{1.88cm}
\begin{tabular}{cc}
\multirow{9}{3cm}{
\begin{tikzpicture}[scale=0.6, line width=0.7pt]
\tikzstyle{every node}=[minimum width=4.5pt, inner sep=0pt, circle]
\draw (-0.8,-1) node (v4) [draw, fill=gray, label=left:{\footnotesize $v_4$}] {};
\draw (1.8,-1) node (v3) [draw, fill=gray, label=right:{\footnotesize $v_3$}] {};
\draw (0.5,1.8) node (v1) [draw, fill=gray, label=above:{\footnotesize $v_1$}] {};
\draw (0.5,0.3) node (v2) [draw, fill=gray, label=left:{\footnotesize $v_2$}] {};
\draw [->, color=blue] (v4) to [out=60,in=120, bend right]node[below]{\textcolor{blue}{\footnotesize $3$}} (v3);
\draw [->] (v3) to [out=60,in=120, bend right]node[below]{\footnotesize $1$} (v4);
\draw [->] (v3) to node[above]{\footnotesize $1$} (v2);
\draw [->] (v2) to node[above]{\footnotesize $1$} (v4);
\draw [->, color=red] (v2) to [out=60,in=120, bend left]node[left]{\textcolor{red}{\footnotesize $9$}} (v1);
\draw [->] (v1) to [out=60,in=120, bend left]node[right]{\footnotesize $1$} (v2);
\end{tikzpicture}
}
& \\
& \\
&
$L=\left(\begin{array}{ccccc}
0 & 1 & 0 & 0\\
\color{red}{9} & 0 & 0 & 1\\
0 & 1 & 0 & 1\\
0 & 0 & \color{blue}{3} & 0
\end{array}\right)$\\
& \\
\end{tabular}
	
\caption{A digraph $G$ and its adjacency matrix $L$.}\label{fig:01}
\end{figure}
\vspace{-5mm}
Since $G$ is a strongly connected graph, then $L$ is an irreducible matrix. 
Therefore, the first step of Algorithm~\ref{AA} consists of computing the sets $\min \mathcal{D}_{\geq 0}(L_s)$ for all the sub-matrices of $L$ of size $n-1$.
It can be checked that 
\[\min\mathcal{D}_{\geq 0}(L_1)=\{ (1,2,3),(1,3,2),(2,1,5),(2,5,1),(1,1,6),(1,6,1),(3,1,4),(3,4,1),(3,2,2) \},\] 
$$\min\mathcal{D}_{\geq 0}(L_2)=\{ (1,2,2),(1,1,4),(1,4,1) \}$$
and $\min\mathcal{D}_{\geq 0}(L_3)=\min\mathcal{D}_{\geq 0}(L_4)= \{ (2,5,1),(5,2,1),(3,4,1),(4,3,1),(1,10,1),(10,1,1) \}.$
From this we get that the set of vectors $\bff{d}$ given at steps $(1)-(4)$ of Algorithm~\ref{A1} is equal to 
\[
\begin{Bmatrix}
(5,2,2,3)_{-12},&(5,2,3,2)_{-12},&(10,1,2,3)_{-27},&(10,1,3,2)_{-27},&(2,5,2,2)_{-5},\\
(3,4,2,2)_{-6},&(4,3,2,2)_{-9},&(1,10,2,2)_{-2},&(5,2,1,5)_{-13},&(5,2,5,1)_{-13},\\
(10,1,1,6)_{-27},&(10,1,6,1)_{-27},&(1,10,1,4)_{-2},&(1,10,4,1)_{-2},&(2,5,1,4)_{-5},\\
(2,5,4,1)_{-2},&(3,4,1,4)_{-6},&(3,4,4,1)_{-6},&(4,3,1,4)_{-9},&(4,3,4,1)_{-9}.
\end{Bmatrix}
\]
where the sub-index in each vector $\bff{d}$ corresponds to the determinant of $(\textrm{Diag}(\bff{d})-L)$.
Following the rest of Algorithm~\ref{A1} we get that $\min\mathcal{D}_{\geq 0}(A(G))$ is equal to
\begin{center}
\begin{tabular}{|cccccc|}
\hline
\vspace{-2mm}
&&&&&\\
$(5,3,2,3)_3$&$(5,3,3,2)_3$&$(9,2,2,3)_0$&$(9,2,3,2)_0$&$(6,2,3,3)_0$&$(6,2,2,5)_3$\\
$(6,2,5,2)_3$&$(5,2,6,3)_0$&$(5,2,3,6)_0$&$(5,2,2,9)_0$&$(5,2,9,2)_0$&$(5,2,5,4)_2$\\
$(5,2,4,5)_2$&$(7,2,2,4)_4$&$(7,2,4,2)_4$&$(5,5,2,2)_1$&$(2,8,2,2)_1$&$(2,5,5,2)_1$\\
$(2,5,2,5)_1$&$(3,6,2,2)_0$&$(2,6,3,2)_3$&$(2,6,2,3)_3$&$(2,5,3,3)_0$&$(3,4,2,3)_0$\\
$(3,4,3,2)_0$&$(9,4,2,2)_0$&$(1,10,3,2)_0$&$(1,10,2,3)_0$&$(1,12,2,2)_0$&$(18,1,3,3)_0$\\
$(10,1,11,3)_0$&$(10,1,3,11)_0$&$(14,1,4,3)_3$&$(14,1,3,4)_3$&$(12,1,5,3)_0$&$(12,1,3,5)_0$\\
$(12,1,4,4)_3$&$(11,1,7,3)_3$&$(11,1,3,7)_3$&$(11,1,5,4)_1$&$(11,1,4,5)_1$&$(10,1,9,4)_3$\\
$(10,1,4,9)_3$&$(10,1,7,5)_2$&$(10,1,5,7)_2$&$(10,1,6,6)_3$&$(23,1,2,4)_1$&$(23,1,4,2)_1$\\
$(16,1,2,5)_1$&$(16,1,5,2)_1$&$(14,1,2,6)_3$&$(14,1,6,2)_3$&$(12,1,2,8)_3$&$(12,1,8,2)_3$\\
$(11,1,2,10)_1$&$(11,1,10,2)_1$&$(10,1,2,17)_1$&$(10,1,17,2)_1$&$(4,3,4,2)_3$&$(4,3,2,4)_3$\\
$(4,3,3,3)_6$&$(18,2,1,5)_0$&$(18,2,5,1)_0$&$(5,2,1,18)_0$&$(5,2,18,1)_0$&$(6,3,1,5)_0$\\
$(6,3,5,1)_0$&$(6,2,1,9)_0$&$(6,2,9,1)_0$&$(7,2,1,8)_4$&$(7,2,8,1)_4$&$(8,2,1,7)_4$\\
$(8,2,7,1)_4$&$(5,3,1,6)_3$&$(5,3,6,1)_3$&$(9,2,1,6)_0$&$(9,2,6,1)_0$&$(4,4,1,5)_2$\\
$(4,4,5,1)_2$&$(10,2,1,6)_3$&$(10,2,6,1)_3$&$(10,1,1,33)_0$&$(10,1,33,1)_0$&$(11,1,1,20)_1$\\
$(11,1,20,1)_1$&$(12,1,1,15)_0$&$(12,1,15,1)_0$&$(13,1,1,13)_1$&$(13,1,13,1)_1$&$(14,1,1,12)_3$\\
$(14,1,12,1)_3$&$(15,1,1,11)_3$&$(15,1,11,1)_3$&$(16,1,1,10)_1$&$(16,1,10,1)_1$&$(18,1,1,9)_0$\\
$(18,1,9,1)_0$&$(23,1,1,8)_1$&$(23,1,8,1)_1$&$(36,1,1,7)_0$&$(36,1,7,1)_0$&$(1,12,1,4)_0$\\
$(1,12,4,1)_0$&$(1,11,1,5)_1$&$(1,11,5,1)_1$&$(1,10,1,6)_0$&$(1,10,6,1)_0$&$(5,5,1,4)_1$\\
$(5,5,4,1)_1$&$(3,6,1,4)_0$&$(3,6,4,1)_0$&$(3,5,1,5)_5$&$(3,5,5,1)_5$&$(2,8,1,4)_1$\\
$(2,8,4,1)_1$&$(2,6,1,5)_0$&$(2,6,5,1)_0$&$(2,5,1,9)_0$&$(2,5,9,1)_0$&$(9,4,1,4)_0$\\
$(9,4,4,1)_0$&$(3,4,1,6)_0$&$(3,4,6,1)_0$&$(4,3,1,7)_0$&$(4,3,7,1)_0$&\\
\hline
\end{tabular}
\end{center}
\vspace{2mm}

Thus, $\min\mathcal{D}_{\geq 0}(L)$ has $125$ elements and $54$ of them are d-arithmetical structures on $L$. The set $\mathcal{D}(L)$ is listed below. 

\begin{center}
\begin{tabular}{|cccccc|}
\hline	
\vspace{-2mm}
&&&&&\\
$(9,2,2,3)$&$(9,2,3,2)$&$(6,2,3,3)$&$(5,2,6,3)$&$(5,2,3,6)$&$(5,2,2,9)$\\
$(5,2,9,2)$&$(3,6,2,2)$&$(2,5,3,3)$&$(3,4,2,3)$&$(3,4,3,2)$&$(9,4,2,2)$\\
$(1,10,3,2)$&$(1,10,2,3)$&$(1,12,2,2)$&$(18,1,3,3)$&$(10,1,11,3)$&$(10,1,3,11)$\\
$(12,1,5,3)$&$(12,1,3,5)$&$(18,2,1,5)$&$(18,2,5,1)$&$(5,2,1,18)$&$(5,2,18,1)$\\
$(6,3,1,5)$&$(6,3,5,1)$&$(6,2,1,9)$&$(6,2,9,1)$&$(9,2,1,6)$&$(9,2,6,1)$\\
$(10,1,1,33)$&$(10,1,33,1)$&$(12,1,1,15)$&$(12,1,15,1)$&$(18,1,1,9)$&$(18,1,9,1)$\\
$(36,1,1,7)$&$(36,1,7,1)$&$(1,12,1,4)$&$(1,12,4,1)$&$(1,10,1,6)$&$(1,10,6,1)$\\
$(3,6,1,4)$&$(3,6,4,1)$&$(2,6,1,5)$&$(2,6,5,1)$&$(2,5,1,9)$&$(2,5,9,1)$\\
$(9,4,1,4)$&$(9,4,4,1)$&$(3,4,1,6)$&$(3,4,6,1)$&$(4,3,1,7)$&$(4,3,7,1)$\\
\hline \end{tabular}
\end{center}
\end{Example}

At this point, given an integer square matrix $L$ with diagonal zero we have an algorithm that computes some of the integer solutions of the polynomial equation $f_L(X)=\mathrm{det}(Diag(X)-L)=0$. 
Thus, it is natural to ask: When is a polynomial is the determinant of a matrix with diagonal $X$? 
In the following we will see that not every polynomial is the determinant of a matrix of the form 
\[
Diag(X)-L.
\] 
Moreover, we show that the set of monic (every term) square-free polynomials that are the determinant of a matrix $Diag(X)-L$ is in some sense very small.
Firstly, it is clear that if a polynomial is equal to $\mathrm{det}(Diag(X)-L)$, then it must be monic (the coefficient of the monomial $x_1x_2\dots x_n$ is always $1$) and (every term) square-free. 
Therefore we restrict to monic polynomials with every term square-free. 
For the rest of this section we will simply call them monic and square-free polynomials. 
Let
\[
\mathbb{Z}[X]^*=\big\{ f\in\mathbb{Z}[X]\,\big|\,f\text{ is monic and square-free} \big\}/(\sim),
\]
where $f\sim g$ if there exists $\bff{d}\in\mathbb{Z}^n$ such that $f(X+\bff{d})=g(X)$.
We consider two polynomials equivalent because when one can be obtained as an evaluation of the other is because its integer solutions are essentially the same.
Note that 
\[
\mathbb{Z}[X]^*\simeq\big\{ f\in\mathbb{Z}[X]\,\big|\,f\text{ is monic square-free with } \mathrm{coef} \left( \frac{\prod_{i=1}^{n} x_i}{x_j}\right)=0 \text{ for all } 1\leq j \leq n\big\}. 
\]
Also, let $\mathbb{MP}[X]=\big\{f\in\mathbb{Z}[X]^*\,\big|\,f=\det(\textrm{Diag}(X)-L)\text{ for some matrix } L \text{ with zero diagonal} \big\}$.
Clearly 
\[
\mathbb{MP}[X]\subseteq\mathbb{Z}[X]^*.
\] 
If $|X|=2$, we can easily check equality holds, that is, $\mathbb{MP}[x_1,x_2]=\mathbb{Z}[x_1,x_2]^*$.
In a similar way, for $|X|=3$ we will prove the following result.

\begin{Proposition}\label{xsizethree}
If $X=(x_1,x_2,x_3)$, then
\[
\mathbb{MP}[X]=\big\{ f=x_1x_2x_3+a_1x_1+a_2x_2+a_3x_3+b\in\mathbb{Z}[X]^*\,\big|\,b=\dfrac{a_1a_2a_3-n^2}{n}\in \mathbb{Z}\big\}.
\]
\end{Proposition}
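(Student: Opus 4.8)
The plan is to compute $\det(\mathrm{Diag}(X)-L)$ explicitly for a general integer matrix $L=(\ell_{ij})$ of size $3$ with zero diagonal and read off its coefficients. Expanding over permutations, $\det(\mathrm{Diag}(X)-L)=\sum_{\sigma\in S_3}\mathrm{sgn}(\sigma)\prod_{i}(\mathrm{Diag}(X)-L)_{i\sigma(i)}$, the identity contributes $x_1x_2x_3$, the three transpositions contribute the linear part, and the two $3$-cycles contribute the constant. This yields $a_1=-\ell_{23}\ell_{32}$, $a_2=-\ell_{13}\ell_{31}$, $a_3=-\ell_{12}\ell_{21}$ and $b=-(c_1+c_2)$, where $c_1=\ell_{12}\ell_{23}\ell_{31}$ and $c_2=\ell_{13}\ell_{21}\ell_{32}$ are the two directed $3$-cycle products. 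Since no monomial $x_ix_j$ appears, the determinant already sits in the canonical representative of its class in $\mathbb{Z}[X]^*$, so membership in $\mathbb{MP}[X]$ is an exact coefficient equality, not merely an equality up to shift.

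Next I would record the structural identity
\[
c_1c_2=(\ell_{12}\ell_{21})(\ell_{23}\ell_{32})(\ell_{13}\ell_{31})=(-a_3)(-a_1)(-a_2)=-a_1a_2a_3 .
\]
Together with $c_1+c_2=-b$ this shows that $c_1$ and $c_2$ are exactly the two roots of $t^2+bt-a_1a_2a_3$. For the inclusion $\mathbb{MP}[X]\subseteq$ (claimed set), note that $c_1,c_2$ are integers, so the quadratic has an integer root; taking $n$ to be a nonzero such root and solving $n^2+bn-a_1a_2a_3=0$ for $b$ gives $b=(a_1a_2a_3-n^2)/n\in\mathbb{Z}$, as required.

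For the reverse inclusion, given an integer $n$ with $b=(a_1a_2a_3-n^2)/n$, set $p=-a_1$, $q=-a_2$, $r=-a_3$, so that $n\mid pqr$. The task becomes to find divisors $d_p\mid p$, $d_q\mid q$, $d_r\mid r$ with $d_pd_qd_r=n$; then $\ell_{23}=d_p,\ \ell_{32}=p/d_p$, $\ell_{31}=d_q,\ \ell_{13}=q/d_q$, $\ell_{12}=d_r,\ \ell_{21}=r/d_r$ define an integer matrix $L$ whose three $2$-cycle products are $p,q,r$ and whose first $3$-cycle product is $c_1=d_pd_qd_r=n$, forcing $c_2=pqr/n$ and hence $b=-(c_1+c_2)$; thus $\det(\mathrm{Diag}(X)-L)=f$. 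Such a factorization exists because, prime by prime, $v_\pi(n)\le v_\pi(p)+v_\pi(q)+v_\pi(r)$, so the exponent $v_\pi(n)$ can be split among $d_p,d_q,d_r$; the sign of $n$ is absorbed by allowing negative entries, which is legitimate since $L$ is not required to be non-negative here.

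I expect the main obstacle to be this realizability step, namely proving the prime-by-prime splitting with correct sign bookkeeping, rather than the determinant computation itself. A second point needing separate care is the degenerate case $a_1a_2a_3=0$ together with $b=0$, which forces $n=0$ and so escapes the stated formula (for instance $x_1x_2x_3=\det(\mathrm{Diag}(X))$); here one argues directly that both $3$-cycle products can be made to vanish, so the polynomial is still realized by a suitable $L$.
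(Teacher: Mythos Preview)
Your proof is correct and follows essentially the same route as the paper: expand $\det(\mathrm{Diag}(X)-L)$ for a general $3\times 3$ matrix with zero diagonal, identify the linear coefficients as minus the $2$-cycle products and the constant as minus the sum of the two $3$-cycle products, and then use $c_1c_2=-a_1a_2a_3$ to relate $b$ to a divisor $n$ of $a_1a_2a_3$. The paper writes this directly as $b=\dfrac{a_1a_2a_3}{n}-n$ with $n=a_{12}a_{23}a_{31}$; your Vieta packaging (that $c_1,c_2$ are the roots of $t^2+bt-a_1a_2a_3$) is the same computation rephrased.

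Where you go beyond the paper is in the reverse inclusion. The paper's proof body only argues $\mathbb{MP}[X]\subseteq\{\dots\}$; the converse is deferred to a post-proof remark that simply exhibits the matrix $\begin{pmatrix}x_1&-n_3&a_2/n_2\\ a_3/n_3&x_2&-n_1\\ -n_2&a_1/n_1&x_3\end{pmatrix}$ with $n=n_1n_2n_3$ and $n_i\mid a_i$, without justifying that such a splitting of $n$ exists. Your prime-by-prime argument that $v_\pi(n)\le v_\pi(p)+v_\pi(q)+v_\pi(r)$ allows one to distribute exponents, together with the sign absorption, is exactly the missing step, and your explicit flag of the degenerate case $a_1a_2a_3=b=0$ (where both cycle products vanish and no nonzero $n$ fits the formula) is a genuine edge case the paper glosses over. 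So: same approach, but your treatment of the $\supseteq$ direction is more complete.
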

\begin{proof}
Let $A=\begin{pmatrix}
0 & a_{1,2} & a_{1.3}\\
a_{2.1} & 0 & a_{2,3}\\
a_{3,1} & a_{3,2} & 0
\end{pmatrix}$ be an integer matrix with zero diagonal. 
Then 
\[
\det(Diag(x_1,x_2,x_3)-A)=x_1x_2x_3-a_{2,3}a_{3,2}x_1-a_{1,3}a_{3,1}x_2-a_{1,2}a_{2,1}x_3+(-a_{1,3}a_{2,1}a_{3,2}-a_{1,2}a_{2,3}a_{3,1})
\]
Now, let us set $a_1=-a_{2,3}a_{3,2}$, $a_2=-a_{1,3}a_{3,1}$, $a_3=-a_{2,3}a_{3,2}$ and $b=(-a_{1,3}a_{2,1}a_{3,2}-a_{1,2}a_{2,3}a_{3,1})$. 
Therefore, $b=\dfrac{a_1a_2a_3}{a_{1,2}a_{2,3}a_{3,1}}-a_{1,2}a_{2,3}a_{3,1}$. We know that both $b$ and $a_{1,2}a_{2,3}a_{3,1}$ are integer numbers. 
Then $a_{1,2}a_{2,3}a_{3,1}$ is a divisor of $a_1a_2a_3$, since $\dfrac{a_1a_2a_3}{a_{1,2}a_{2,3}a_{3,1}}$ is also an integer. 
Now, we can conclude that $b=\dfrac{a_1a_2a_3}{n}-n$, where $n$ divides $a_1a_2a_3$.  
\end{proof}

Note that $b\in \mathbb{Z}$ if and only if $n$ divides $a_1a_2a_3$.
If $a_1a_2a_3\neq 0$, then there is a finite set of $b^{'}$s on $\mathbb{Z}$ such that $f=x_1x_2x_3+a_1x_1+a_2x_2+a_3x_3+b$ is in $\mathbb{MP}[x_1,x_2,x_3]$ and therefore $\mathbb{MP}[X]\subsetneq\mathbb{Z}[X]^*$ for all $n\geq 3$.
On the other hand, if $a_1a_2a_3=0$, then $x_1x_2x_3+a_1x_1+a_2x_2+a_3x_3+b$ is in $\mathbb{MP}[x_1,x_2,x_3]$ for every $b\in\mathbb{Z}$. 
Furthermore, if $f$ comes from a matrix, then 
\[
f=\det \begin{bmatrix}
x_1 & -n_3 & \frac{a_2}{n_2}\\
\frac{a_3}{n_3} & x_2 & -n_1\\
-n_2 & \frac{a_1}{n_1} & x_3 
\end{bmatrix},
\]
where $n=n_1n_2n_3$ and $n_i|a_i$ (here we are considering every integer as a ``divisor" of $0$).

Next example will be helpful to illustrate this.
\begin{Example}\label{polynomial_not_determinant}
If $g=x_1x_2x_3-19x_1+2x_2+3x_3+b$, then
\[
b=\frac{-114}{n}-n\text{ where } n\in\mathrm{Div}(114)=\pm\{1,2,3,6,19,38,57,114 \}.
\]
This implies that $b\in \pm\{25,41,59,115 \}$.
It is not difficult to check by Proposition~\ref{xsizethree} that $f(x_1,x_2,x_3)=x_1x_2x_3-19x_1+2x_2+3x_3-23\notin\mathbb{MP}[x_1,x_2,x_3]$.
\end{Example}

When $n\geq 4$ we have similar restrictions for the coefficients of the polynomial $f$.
Moreover, the gap between $\mathbb{MP}[X]$ and $\mathbb{Z}[X]^*$ grows as $n$ grows.
	
We finish this section by presenting computational data about arithmetical structures of graphs with less or equal to five vertices.
Additionally, this data provides evidence for conjecture~\ref{conj}.

\begin{Conjecture}\cite[Conjecture 6.10]{arithmetical}\label{conj}
If $G$ is a simple graph with $n$ vertices, then 
\[
\Big|\,\mathcal{A}(P_n)\,\Big|\leq \Big|\,\mathcal{A}(G)\,\Big| \leq \Big|\,\mathcal{A}(K_n)\,\Big|,
\]
where $P_n$ and $K_n$ are the path with and the complete graph with $n$ vertices respectively.
\end{Conjecture}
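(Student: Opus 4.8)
The plan is to separate the two inequalities, since the lower and upper bounds require genuinely different extremal arguments, and to work throughout with the reformulation $|\mathcal{A}(G)|=|\mathcal{D}(G)|$, where by Theorem~\ref{almostrs} and Algorithm~\ref{AA} the set $\mathcal{D}(G)$ consists exactly of those $\mathbf{d}\in\mathbb{N}_+^n$ for which $\mathrm{Diag}(\mathbf{d})-A(G)$ is an almost non-singular $M$-matrix with determinant zero. Both bounds should then be phrased as statements about how this count behaves when the graph is modified, so that the monotonicity of the determinant along diagonal coordinates (Theorems~\ref{almostequiv} and~\ref{monodet}) can be brought to bear.

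For the upper bound $|\mathcal{A}(G)|\leq|\mathcal{A}(K_n)|$, the first thing I would try is an edge-addition monotonicity principle: if $G'=G+e$ with $e=\{u,v\}$, then $|\mathcal{A}(G)|\leq|\mathcal{A}(G')|$. Since $K_n$ is reached from any connected $G$ by successively inserting the missing edges, iterating such a principle yields the bound. To prove one step I would attempt an explicit injection $\mathcal{A}(G)\hookrightarrow\mathcal{A}(G')$. Writing $A(G')=A(G)+N$, where $N$ has ones in positions $(u,v)$ and $(v,u)$ and zeros elsewhere, an arithmetical structure $(\mathbf{d},\mathbf{r})$ of $G$ satisfies $(\mathrm{Diag}(\mathbf{d})-A(G)-N)\mathbf{r}^t=-N\mathbf{r}^t$, so keeping $\mathbf{r}$ fixed forces the endpoint corrections $d_u'=d_u+r_v/r_u$ and $d_v'=d_v+r_u/r_v$, which are integral only when $r_u=r_v$. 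The correct map must therefore rescale $\mathbf{r}$ and readjust $\mathbf{d}$ on $\{u,v\}$ simultaneously; one can try $\mathbf{r}'=\mathrm{lcm}(r_u,r_v)\cdot\mathbf{r}/\!\gcd$, reduce to a primitive vector, and then use the monotonicity of Theorem~\ref{monodet} to locate the unique minimal diagonal increment restoring an almost non-singular $M$-matrix of determinant zero for $G'$, with injectivity coming from the forcedness of that minimal increment.

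For the lower bound $|\mathcal{A}(P_n)|\leq|\mathcal{A}(G)|$ edge monotonicity does not suffice, since $P_n$ is a spanning tree and every other tree has the same number of edges yet must also carry at least as many structures. Here I would induct on $n$ through the vertex-deletion recursion underlying Algorithm~\ref{A1}: by Proposition~\ref{DerivateMinors} one has $\partial f_L/\partial x_s=f_{L_s}$, so the proper coefficients of $f_{L,\mathbf{d}}$ that govern $\mathcal{D}(G)$ are assembled from those of the deletions $L_s=A(G-v_s)$ together with the knapsack contribution $\mathcal{C}(\mathrm{Diag}(\mathbf{d})-L)$. Deleting a leaf of $P_n$ returns $P_{n-1}$, the extremal case at rank $n-1$, whereas a general connected $G$ has deletions that dominate the corresponding path deletions in the induction hypothesis; the goal is to show this domination propagates through the recursion so that the path's count is minorized term by term. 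A technical wrinkle to handle is that deleting a vertex of $G$ may disconnect it, so the induction must be carried on the refined invariant $\min\mathcal{D}_{\geq0}(L_s)$ (finite by Dickson's Lemma regardless of irreducibility) rather than directly on $|\mathcal{A}(G-v_s)|$.

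The hard part will be the edge-addition monotonicity. The determinant is monotone along a single diagonal coordinate, but driving an \emph{off-diagonal} entry more negative does not interact with the count of determinant-zero almost non-singular $M$-matrices in any manifestly monotone fashion, and, as the computation above shows, the naive $\mathbf{r}$-preserving map leaves $\mathbb{N}_+^n$ and can destroy primitivity. Producing an injection that controls the $\gcd$ after rescaling, while simultaneously certifying that the adjusted $\mathbf{d}'$ again yields determinant zero, is where essentially all of the difficulty concentrates; this is presumably why the statement has so far resisted proof and why only computational evidence is offered here.
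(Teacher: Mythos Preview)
The paper does not prove this statement: it is stated as a \emph{Conjecture} and the only content the paper supplies is the table of values $|\mathcal{A}(G)|$ for all connected simple graphs on $3$, $4$, and $5$ vertices, together with the OEIS data for $|\mathcal{A}(K_n)|$ with $n\le 8$. There is therefore no proof in the paper against which to compare your proposal.

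What you have written is not a proof either, and you acknowledge as much in the final paragraph. Your outline identifies two plausible reduction strategies (edge-addition monotonicity for the upper bound, a vertex-deletion induction for the lower bound) and correctly isolates the obstruction in each. For the upper bound, the map $(\mathbf{d},\mathbf{r})\mapsto(\mathbf{d}',\mathbf{r})$ with $d_u'=d_u+r_v/r_u$, $d_v'=d_v+r_u/r_v$ is indeed the natural candidate and indeed fails integrality in general; the rescaling repair you propose does not obviously give an injection, and the edge-monotonicity statement $|\mathcal{A}(G)|\le|\mathcal{A}(G+e)|$ is itself open. For the lower bound, the induction you sketch is too vague to evaluate: you would need a precise comparison statement between $\min\mathcal{D}_{\ge 0}$ of a path deletion and of a general deletion, and you have not formulated one. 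In short, your proposal is a reasonable research plan but contains no argument that goes beyond what the paper already concedes by labelling the statement a conjecture.
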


In simple words, Conjecture~\ref{conj}  says that for graphs, arithmetical structures are most simple when it is a path and the most complicated case happens when it is the complete graph.
Moreover, when $G$ is the star graph with $n$ leaves, then its arithmetical structures satisfy that $\sum_{i=1}^n \frac{1}{d_i}\in \mathbb{N}_+$.
Thus the arithmetical structures on the star with $n+1$ vertices are more complicated than the arithmetical structures on the complete graph with $n$ vertices.
In general, for many graphs with $n+1$ vertices, their arithmetical structures are at least as complicated as the arithmetical structures on $K_n$.

There exists upper bounds for the entries of vector $\bff{r}$ of an arithmetical structure.
For instance, when $G$ is a graph with $n$ vertices and $e(G)$ its number of edges, \cite[Theorem 3.4]{bound} establish that
\[
\bff{r}\leq \frac{1}{(n-1)!}e(G)^{3\cdot2^{n-2}-2} \bff{1}.
\]
This upper bound will lead to a brute force algorithm with time complexity of the order of \[\frac{n^2}{((n-1)!)^n}e(G)^{n(3\cdot 2^{n-2}-2)} \geq n^2 (n-1)^{3n\cdot 2^{n-2} -2n-n^2} \text{ for any connected graph }G\]

Moreover, this brute-force algorithm applied to the complete graph $K_n$ would have a complexity of the order of 
\[
n^{3n\cdot 2^{n-2} -2n+ 2} \cdot \dfrac{(n-1)^{n(3\cdot 2^{n-2} -2)}}{ ((n-1)!)^n }.
\]
A deeper study of the possible value of the largest entry of an arithmetical structure on the complete graph is conducted in~\cite{harris}.
In order to have an idea of the complexity of arithmetical structures on the complete graph see the sequence A002967~\cite{ef1}. 
It shows that 
\[
\Big| \mathcal{A}(K_6)\Big|=2025462, \Big| \mathcal{A}(K_7)\Big|= 1351857641\text{ and }\Big| \mathcal{A}(K_8)\Big|\simeq6.25\times 10^{12}.
\] 
On the other hand, the largest $d_i$ such that $\mathbf{d}\in\mathcal{D}(K_n)$ is given by the Sylvester's sequence $a(n)=a(n-1)(a(n-1)-1)+1$ with $a(0)=2$. 
For instance the highest $d_i$ in a d-arithmetical structure of $K_8$ is about $1.13\times 10^{26}$. 
Equivalently $\max\{d_i\,|\, i=1,\ldots,n \}=\lfloor C^{2^n}+\frac{1}{2} \rfloor$, where $C \simeq 1.2640847353$ is the Vardi constant.
Thus a brute force algorithm based on this upper bound for the entries of the d-arithmetical structure of the complete graph is of the order of $n^6 C^{n\cdot 2^n}$.

The reader can find a code for Algorithm~\ref{A1}, written in sagemath~\cite{Sagemath}, at the link in~\cite{Github}. 
Note that the implementation of the algorithm still can be improved. 
For instance, in the code given above we are using only the minimal elements in the set defined by steps (1) to (4). 
For the case of simple graphs, we can further improve the computation using the symmetry of the graph. 
More precisely, the twin vertices of the graph. 
Let $G$ be a graph, a pair of vertices $w,v\in V(G)$ are said to be twins if $N_G(w)\setminus \{v\} = N_G(v)\setminus \{w\}$. 
Now, assume $G$ is a simple graph and $w,v\in V(G)$ are twins. If $\bff{f}$ is a d-arithmetical structure of $G-w$. 
Then, for $G-v$, there is a d-arithmetical structure $\bff{h}$, such that $\bff{h}_u = \bff{f}_ u$ for every $w\neq u\neq v$ and $\bff{h}_w = \bff{f}_v$.
Hence, we can relax step (3) of Algorithm~\ref{A1} by removing the two sets corresponding to a pair of twin vertices and adding instead a similar set of the same size. 
A similar observation can be made for larger sets of twin vertices. A code for simple graphs of five vertices implementing this changes can also be found in \cite{Github}.
Moreover, we can proceed similarly for multidigraphs if all the weights of all the corresponding edges of the twin vertices in question are equal. 
In Table \ref{tab:graphs1} we list all connected graphs with three and four vertices together with the number of arithmetical graphs and the number of elements in $\min \mathcal{D}_{\geq 0}(A(G))$. 
As well as for the seven graphs on $5$ vertices with fewer elements in $\min \mathcal{D}_{\geq 0}(L)$, which are also the fastest to compute. 

\begin{table}[H]
\begin{center}
\begin{tabular}{|c|c|c|c|c|c|c|c|c|}
\hline
Graph & {\small $\big|\mathcal{A}\big|$} & {\small $\big|\min \mathcal{D}_{\geq 0}\big|$} & Graph & {\small $\big|\mathcal{A}\big|$} & {\small $\big|\min \mathcal{D}_{\geq 0}\big|$} & Graph & {\small $\big|\mathcal{A}\big|$} & {\small $\big|\min \mathcal{D}_{\geq 0}\big|$}\\
\hline
&&&&&&&&\\
\begin{tikzpicture}[scale=1, line width=0.7pt]
\tikzstyle{every node}=[minimum width=4.5pt, inner sep=0pt, circle]
\draw (-1,0) node (v1) [draw, fill=gray, label=left:{\notag}] {};
\draw (0,0) node (v2) [draw, fill=gray, label=right:{\notag}] {};
\draw (-0.5,0.75) node (v3) [draw, fill=gray, label=right:{\notag}] {};
\draw (v2) -- (v3) -- (v1);
\end{tikzpicture} & 
\begin{tabular}{c}
$2$\\
 \\
\end{tabular}& 
\begin{tabular}{c}
$2$\\
\\
\end{tabular}
&
\begin{tikzpicture}[scale=1, line width=0.7pt]
\tikzstyle{every node}=[minimum width=4.5pt, inner sep=0pt, circle]
\draw (-1,0) node (v1) [draw, fill=gray, label=left:{\notag}] {};
\draw (0,0) node (v2) [draw, fill=gray, label=right:{\notag}] {};
\draw (-0.5,0.75) node (v3) [draw, fill=gray, label=right:{\notag}] {};
\draw (v1) -- (v2) -- (v3) -- (v1);
\end{tikzpicture} &\begin{tabular}{c}
$10$\\
\\
\end{tabular}&\begin{tabular}{c}
$10$\\
\\
\end{tabular}&
\begin{tikzpicture}[scale=0.8, line width=0.7pt]
\tikzstyle{every node}=[minimum width=4.5pt, inner sep=0pt, circle]
\draw (-1,0) node (v1) [draw, fill=gray, label=left:{\notag}] {};
\draw (0,0) node (v2) [draw, fill=gray, label=right:{\notag}] {};
\draw (0,1) node (v3) [draw, fill=gray, label=right:{\notag}] {};
\draw (-1,1) node (v4) [draw, fill=gray, label=left:{\notag}] {};
\draw (v1) -- (v2) -- (v3) -- (v4);
\end{tikzpicture} &
\begin{tabular}{c}
$5$\\
\\
\end{tabular} &
\begin{tabular}{c}
$5$\\
\\
\end{tabular}
\\
\hline
\begin{tikzpicture}[scale=0.8, line width=0.7pt]
\tikzstyle{every node}=[minimum width=4.5pt, inner sep=0pt, circle]
\draw (0,0) node (v1) [draw, fill=gray, label=below:{\notag}] {};
\draw (-0.75,-0.5) node (v2) [draw, fill=gray, label=left:{\notag}] {};
\draw (0.75,-0.5) node (v3) [draw, fill=gray, label=right:{\notag}] {};
\draw (0,0.8) node (v4) [draw, fill=gray, label=left:{\notag}] {};
\draw (v1) -- (v2); 
\draw (v1) -- (v3);
\draw (v1) -- (v4);
\end{tikzpicture} & 
\begin{tabular}{c}
$14$\\
\\
\end{tabular}&
\begin{tabular}{c}
$14$\\
\\
\end{tabular} &
\begin{tikzpicture}[scale=0.8, line width=0.7pt]
\tikzstyle{every node}=[minimum width=4.5pt, inner sep=0pt, circle]
\draw (-1.1,-0.6) node (v1) [draw, fill=gray, label=left:{\notag}] {};
\draw (0.1,-0.6) node (v2) [draw, fill=gray, label=right:{\notag}] {};
\draw (-0.5,0.7) node (v3) [draw, fill=gray, label=left:{\notag}] {};
\draw (-0.5,0.0) node (v4) [draw, fill=gray, label=right:{\notag}] {};
\draw (v1) -- (v2) -- (v4) -- (v1);
\draw (v4) -- (v3);
\end{tikzpicture} &\begin{tabular}{c}
$26$\\
\\
\end{tabular}&\begin{tabular}{c}
$42$\\
\\
\end{tabular}&
\begin{tikzpicture}[scale=0.8, line width=0.7pt]
\tikzstyle{every node}=[minimum width=4.5pt, inner sep=0pt, circle]
\draw (-1,0) node (v1) [draw, fill=gray, label=left:{\notag}] {};
\draw (0,0) node (v2) [draw, fill=gray, label=right:{\notag}] {};
\draw (0,1) node (v3) [draw, fill=gray, label=right:{\notag}] {};
\draw (-1,1) node (v4) [draw, fill=gray, label=left:{\notag}] {};
\draw (v1) -- (v2) -- (v3) -- (v4) -- (v1);
\end{tikzpicture} & 
\begin{tabular}{c}
$35$\\
\\
\end{tabular}&
\begin{tabular}{c}
$35$\\
\\
\end{tabular}
\\
\hline
%&&&&&\\
 \begin{tikzpicture}[scale=0.8, line width=0.7pt]
\tikzstyle{every node}=[minimum width=4.5pt, inner sep=0pt, circle]
\draw (-1,0) node (v1) [draw, fill=gray, label=left:{\notag}] {};
\draw (0,0) node (v2) [draw, fill=gray, label=right:{\notag}] {};
\draw (0,1) node (v3) [draw, fill=gray, label=right:{\notag}] {};
\draw (-1,1) node (v4) [draw, fill=gray, label=left:{\notag}] {};
\draw (v1) -- (v2) -- (v3) -- (v4) -- (v1);
\draw (v1) -- (v3);
\end{tikzpicture} &\begin{tabular}{c}
$63$\\
\\
\end{tabular}&\begin{tabular}{c}
$137$\\
\\
\end{tabular} &
\begin{tikzpicture}[scale=0.8, line width=0.7pt]
\tikzstyle{every node}=[minimum width=4.5pt, inner sep=0pt, circle]
\draw (-1,0) node (v1) [draw, fill=gray, label=left:{\notag}] {};
\draw (0,0) node (v2) [draw, fill=gray, label=right:{\notag}] {};
\draw (0,1) node (v3) [draw, fill=gray, label=right:{\notag}] {};
\draw (-1,1) node (v4) [draw, fill=gray, label=left:{\notag}] {};
\draw (v1) -- (v2) -- (v3) -- (v4) -- (v1);
\draw (v1) -- (v3);
\draw (v2) -- (v4);
\end{tikzpicture} & \begin{tabular}{c}
$215$\\
\\
\end{tabular} &\begin{tabular}{c}
$323$\\
\\
\end{tabular} &
\begin{tikzpicture}[scale=0.5, line width=0.7pt]
\tikzstyle{every node}=[minimum width=4.5pt, inner sep=0pt, circle]
\draw (-1,0) node (v1) [draw, fill=gray, label=left:{\notag}] {};
\draw (-0.5,-1) node (v2) [draw, fill=gray, label=left:{\notag}] {};
\draw (0.5,-1) node (v3) [draw, fill=gray, label=right:{\notag}] {};
\draw (1,0) node (v4) [draw, fill=gray, label=right:{\notag}] {};
\draw (0,1) node (v5) [draw, fill=gray, label=left:{\notag}] {};
\draw (v1) -- (v2) -- (v3) -- (v4) -- (v5);
\end{tikzpicture} 
& 
\begin{tabular}{c}
$14$\\
\\
\end{tabular}
&
\begin{tabular}{c}
$14$\\
\\
\end{tabular} 
\\
\hline
%&&&&&\\
\begin{tikzpicture}[scale=0.5, line width=0.7pt]
\tikzstyle{every node}=[minimum width=4.5pt, inner sep=0pt, circle]
\draw (0,0) node (v1) [draw, fill=gray, label=below:{\notag}] {};
\draw (-1,-0.5) node (v2) [draw, fill=gray, label=left:{\notag}] {};
\draw (1,-0.5) node (v3) [draw, fill=gray, label=right:{\notag}] {};
\draw (0,0.8) node (v4) [draw, fill=gray, label=left:{\notag}] {};
\draw (0,1.6) node (v5) [draw, fill=gray, label=left:{\notag}] {};
\draw (v1) -- (v2); 
\draw (v1) -- (v3);
\draw (v1) -- (v4);
\draw (v4) -- (v5);
\end{tikzpicture} 
&
\begin{tabular}{c}
$46$\\
\\
\end{tabular} & 
\begin{tabular}{c}
$62$\\
\\
\end{tabular}&
\begin{tikzpicture}[scale=0.5, line width=0.7pt]
\tikzstyle{every node}=[minimum width=4.5pt, inner sep=0pt, circle]
\draw (-1.25,0) node (v1) [draw, fill=gray, label=left:{\notag}] {};
\draw (-0.75,-1) node (v2) [draw, fill=gray, label=below:{\notag}] {};
\draw (0.75,-1) node (v3) [draw, fill=gray, label=below:{\notag}] {};
\draw (1.25,0) node (v4) [draw, fill=gray, label=right:{\notag}] {};
\draw (0,0.75) node (v5) [draw, fill=gray, label=above:{\notag}] {};
\draw (v1) -- (v2) -- (v3) -- (v4) -- (v5) -- (v1);
\end{tikzpicture} 
&
\begin{tabular}{c}
$126$\\
\\
\end{tabular}
&
\begin{tabular}{c}
$126$\\
\\
\end{tabular}&
\begin{tikzpicture}[scale=0.5, line width=0.7pt]
\tikzstyle{every node}=[minimum width=4.5pt, inner sep=0pt, circle]
\draw (0,0) node (v1) [draw, fill=gray, label=right:{\notag}] {};
\draw (-1,-0.75) node (v2) [draw, fill=gray, label=left:{\notag}] {};
\draw (1,-0.75) node (v3) [draw, fill=gray, label=right:{\notag}] {};
\draw (0,0.8) node (v4) [draw, fill=gray, label=left:{\notag}] {};
\draw (0,1.6) node (v5) [draw, fill=gray, label=left:{\notag}] {};
\draw (v1) -- (v2) -- (v3); 
\draw (v1) -- (v3);
\draw (v1) -- (v4);
\draw (v4) -- (v5);
\end{tikzpicture} & 
\begin{tabular}{c}
$102$\\
\\
\end{tabular}&
\begin{tabular}{c}
$162$\\
\\
\end{tabular}
\\
\hline
\begin{tikzpicture}[scale=0.5, line width=0.7pt]
\tikzstyle{every node}=[minimum width=4.5pt, inner sep=0pt, circle]
\draw (-1.25,0) node (v1) [draw, fill=gray, label=left:{\notag}] {};
\draw (-0.75,-1) node (v2) [draw, fill=gray, label=below:{\notag}] {};
\draw (0.75,-1) node (v3) [draw, fill=gray, label=below:{\notag}] {};
\draw (1.25,0) node (v4) [draw, fill=gray, label=right:{\notag}] {};
\draw (0,0.75) node (v5) [draw, fill=gray, label=above:{\notag}] {};
\draw (v1) -- (v2) -- (v3) -- (v4) -- (v5);
\draw (v1) -- (v4);
\end{tikzpicture} 
&\begin{tabular}{c}
$134$\\
\\
\end{tabular} 
& \begin{tabular}{c}
$245$\\
\\
\end{tabular}&
\begin{tikzpicture}[scale=0.5, line width=0.7pt]
\tikzstyle{every node}=[minimum width=4.5pt, inner sep=0pt, circle]
\draw (-1,0) node (v1) [draw, fill=gray, label=left:{\notag}] {};
\draw (-0.5,-1.2) node (v2) [draw, fill=gray, label=below:{\notag}] {};
\draw (0.5,-1.2) node (v3) [draw, fill=gray, label=below:{\notag}] {};
\draw (1,0) node (v4) [draw, fill=gray, label=right:{\notag}] {};
\draw (0,0.75) node (v5) [draw, fill=gray, label=left:{\notag}] {};
\draw (v1) -- (v2) -- (v3) -- (v4);
\draw (v5) -- (v2);
\draw (v5) -- (v3);
\end{tikzpicture} 
&\begin{tabular}{c}
$120$\\
\\
\end{tabular} & \begin{tabular}{c}
$300$\\
\\
\end{tabular}&
\begin{tikzpicture}[scale=0.5, line width=0.7pt]
\tikzstyle{every node}=[minimum width=4.5pt, inner sep=0pt, circle]
\draw (0,0) node (v1) [draw, fill=gray, label=above:{\notag}] {};
\draw (-1,1) node (v2) [draw, fill=gray, label=left:{\notag}] {};
\draw (-1,-1) node (v3) [draw, fill=gray, label=left:{\notag}] {};
\draw (1,1) node (v4) [draw, fill=gray, label=right:{\notag}] {};
\draw (1,-1) node (v5) [draw, fill=gray, label=right:{\notag}] {};
\draw (v1) -- (v2); 
\draw (v1) -- (v3);
\draw (v1) -- (v4);
\draw (v1) -- (v5);
\end{tikzpicture} 
& \begin{tabular}{c}
$263$\\
\\
\end{tabular}&
\begin{tabular}{c}
$371$\\
\\
\end{tabular}
\\
\hline
\end{tabular}
\end{center}
\caption{This table presents some small graphs together with their number of arithmetical structures ($\big| \mathcal{A} \big|$) and the size of the output of Algorithm~\ref{A1} ($\big|\min \mathcal{D}_{\geq 0}\big|$).}
    \label{tab:graphs1}
\end{table}
\vspace{-5mm}
Similarly for Tables \ref{tab:graphs2} and \ref{tab:graphs3}, therein we also present the total execution time for each graph and the average execution time for each element found by Algorithm \ref{A1} (amortized execution time).

\begin{table}[H]
\begin{center}
\begin{tabular}{|c|c|c|c|c|c|c|c|}
\hline
Graph & {\small $\big|\mathcal{A}\big|$} & {\small $\big| \min \mathcal{D}_{\geq 0}\big|$} & \begin{tabular}{c}
    {\small Total $\&$ amort.}\\
    {\small exec. Times}   
\end{tabular} & Graph & {\small $\big|\mathcal{A}\big|$} &{\small $\big| \min \mathcal{D}_{\geq 0}\big|$} & \begin{tabular}{c}
    {\small Total $\&$ amort.}\\
    {\small exec. Times}   
\end{tabular} \\
\hline

%&&&&&\\
\begin{tikzpicture}[scale=0.5, line width=0.7pt]
\tikzstyle{every node}=[minimum width=4.5pt, inner sep=0pt, circle]
\draw (-1.25,0.2) node (v1) [draw, fill=gray, label=left:{\notag}] {};
\draw (-0.75,-1) node (v2) [draw, fill=gray, label=left:{\notag}] {};
\draw (0.75,-1) node (v3) [draw, fill=gray, label=right:{\notag}] {};
\draw (1.25,0.2) node (v4) [draw, fill=gray, label=right:{\notag}] {};
\draw (0,0.8) node (v5) [draw, fill=gray, label=above:{\notag}] {};
\draw (v2) -- (v3) -- (v5) -- (v2);
\draw (v5) -- (v1);
\draw (v5) -- (v4);
\end{tikzpicture}
& \begin{tabular}{c}
$257$\\
\\
\end{tabular}& \begin{tabular}{c}
$809$\\
\\
\end{tabular} & \begin{tabular}{c}
    29.7 sec.\\
    $\&$\\
    36.7 ms. 
\end{tabular} &
\begin{tikzpicture}[scale=0.5, line width=0.7pt]
\tikzstyle{every node}=[minimum width=4.5pt, inner sep=0pt, circle]
\draw (-1.3,0.2) node (v1) [draw, fill=gray, label=left:{\notag}] {};
\draw (-0.75,-0.8) node (v2) [draw, fill=gray, label=below:{\notag}] {};
\draw (0.75,-0.8) node (v3) [draw, fill=gray, label=below:{\notag}] {};
\draw (1.3,0.2) node (v4) [draw, fill=gray, label=right:{\notag}] {};
\draw (0,0.9) node (v5) [draw, fill=gray, label=above:{\notag}] {};
\draw (v1) -- (v2) -- (v3) -- (v4) -- (v5);
\draw (v3) -- (v1) -- (v4);
\end{tikzpicture}& \begin{tabular}{c}
$388$\\
\\
\end{tabular}&\begin{tabular}{c}
$845$\\
\\
\end{tabular}&
\begin{tabular}{c}
    27.5 sec.\\
    $\&$\\
    32.5 ms. 
\end{tabular}\\
\hline
\begin{tikzpicture}[scale=0.5, line width=0.7pt]
\tikzstyle{every node}=[minimum width=4.5pt, inner sep=0pt, circle]
\draw (-1.3,0) node (v1) [draw, fill=gray, label=left:{\notag}] {};
\draw (-0.75,-1) node (v2) [draw, fill=gray, label=below:{\notag}] {};
\draw (0.75,-1) node (v3) [draw, fill=gray, label=below:{\notag}] {};
\draw (1.3,0) node (v4) [draw, fill=gray, label=right:{\notag}] {};
\draw (0,0.8) node (v5) [draw, fill=gray, label=above:{\notag}] {};
\draw (v1) -- (v2) -- (v3) -- (v4) -- (v5) -- (v1);
\draw (v1) -- (v4);
\end{tikzpicture}
& \begin{tabular}{c}
$290$\\
\\
\end{tabular}&\begin{tabular}{c}
$864$\\
\\
\end{tabular} & \begin{tabular}{c}
    30.3 sec.\\
    $\&$\\
    35.0 ms. 
\end{tabular} &
\begin{tikzpicture}[scale=0.5, line width=0.7pt]
\tikzstyle{every node}=[minimum width=4.5pt, inner sep=0pt, circle]
\draw (-1.3,0) node (v1) [draw, fill=gray, label=left:{\notag}] {};
\draw (-0.75,-0.9) node (v2) [draw, fill=gray, label=below:{\notag}] {};
\draw (0.75,-0.9) node (v3) [draw, fill=gray, label=below:{\notag}] {};
\draw (1.3,0) node (v4) [draw, fill=gray, label=right:{\notag}] {};
\draw (0,0.9) node (v5) [draw, fill=gray, label=above:{\notag}] {};
\draw (v1) -- (v2) -- (v3) -- (v5) -- (v1);
\draw (v2) -- (v4) -- (v5);
\end{tikzpicture}& \begin{tabular}{c}
$571$\\
\\
\end{tabular} & \begin{tabular}{c}
$960$\\
\\
\end{tabular}&
\begin{tabular}{c}
    7.6 sec.\\
    $\&$\\
    7.9 ms. 
\end{tabular}
\\
\hline
\begin{tikzpicture}[scale=0.5, line width=0.7pt]
\tikzstyle{every node}=[minimum width=4.5pt, inner sep=0pt, circle]
\draw (0,0) node (v1) [draw, fill=gray, label=above:{\notag}] {};
\draw (-1,0.9) node (v2) [draw, fill=gray, label=above:{\notag}] {};
\draw (-1,-0.9) node (v3) [draw, fill=gray, label=below:{\notag}] {};
\draw (1,0.9) node (v4) [draw, fill=gray, label=above:{\notag}] {};
\draw (1,-0.9) node (v5) [draw, fill=gray, label=below:{\notag}] {};
\draw (v1) -- (v2) -- (v3) -- (v1); 
\draw (v1) -- (v4) -- (v5) -- (v1);
\end{tikzpicture}& \begin{tabular}{c}
    $835$\\
    \\
\end{tabular} & \begin{tabular}{c}
$1531$\\
\\
\end{tabular}&
\begin{tabular}{c}
    121.3 sec.\\
    $\&$\\
    79.2 ms. 
\end{tabular}
&
\begin{tikzpicture}[scale=0.5, line width=0.7pt]
\tikzstyle{every node}=[minimum width=4.5pt, inner sep=0pt, circle]
\draw (-1.3,0) node (v1) [draw, fill=gray, label=left:{\notag}] {};
\draw (-0.75,-0.9) node (v2) [draw, fill=gray, label=below:{\notag}] {};
\draw (0.75,-0.9) node (v3) [draw, fill=gray, label=below:{\notag}] {};
\draw (1.3,0) node (v4) [draw, fill=gray, label=right:{\notag}] {};
\draw (0,0.9) node (v5) [draw, fill=gray, label=above:{\notag}] {};
\draw (v5) -- (v1) -- (v2) -- (v3) -- (v4);
\draw (v3) -- (v1) -- (v4);
\end{tikzpicture}& \begin{tabular}{c}
$449$\\
\\
\end{tabular}&\begin{tabular}{c}
$1771$\\
\\
\end{tabular}&
\begin{tabular}{c}
    181.0 sec.\\
    $\&$\\
    102.2 ms. 
\end{tabular}
\\
\hline
\begin{tikzpicture}[scale=0.5, line width=0.7pt]
\tikzstyle{every node}=[minimum width=4.5pt, inner sep=0pt, circle]
\draw (-1.3,0) node (v1) [draw, fill=gray, label=left:{\notag}] {};
\draw (-0.75,-1) node (v2) [draw, fill=gray, label=below:{\notag}] {};
\draw (0.75,-1) node (v3) [draw, fill=gray, label=below:{\notag}] {};
\draw (1.3,0) node (v4) [draw, fill=gray, label=right:{\notag}] {};
\draw (0,0.8) node (v5) [draw, fill=gray, label=above:{\notag}] {};
\draw (v1) -- (v2) -- (v3) -- (v4) -- (v5) -- (v1);
\draw (v1) -- (v3);
\draw (v4) -- (v2);
\end{tikzpicture}& \begin{tabular}{c}
$987$\\
\\
\end{tabular}&\begin{tabular}{c}
$2524$\\
\\
\end{tabular} &
\begin{tabular}{c}
    115.3 sec.\\
    $\&$\\
    45.7 ms. 
\end{tabular}
&
\begin{tikzpicture}[scale=0.5, line width=0.7pt]
\tikzstyle{every node}=[minimum width=4.5pt, inner sep=0pt, circle]
\draw (-1.3,0) node (v1) [draw, fill=gray, label=left:{\notag}] {};
\draw (-0.75,-1) node (v2) [draw, fill=gray, label=below:{\notag}] {};
\draw (0.75,-1) node (v3) [draw, fill=gray, label=below:{\notag}] {};
\draw (1.3,0) node (v4) [draw, fill=gray, label=right:{\notag}] {};
\draw (0,0.8) node (v5) [draw, fill=gray, label=above:{\notag}] {};
\draw (v5) -- (v1) -- (v2) -- (v3) -- (v4);
\draw (v3) -- (v1) -- (v4) -- (v2);
\end{tikzpicture} & \begin{tabular}{c}
$1079$\\
\\
\end{tabular} &\begin{tabular}{c}
$3311$\\
\\
\end{tabular}&
\begin{tabular}{c}
    530.6 sec.\\
    $\&$\\
    160.2 ms. 
\end{tabular}
\\
\hline
\end{tabular}
\end{center}
\caption{In this table we also list the total and amortized execution times for these eight graphs on five vertices, presented in seconds and milliseconds respectively.}
    \label{tab:graphs2}
\end{table}

\begin{table}[H]
    
\begin{center}
\begin{tabular}{|c|c|c|c|c|c|c|c|}
\hline
Graph & {\small $\big|\mathcal{A}\big|$} & {\small $\big| \min \mathcal{D}_{\geq 0}\big|$} & \begin{tabular}{c}
    {\small Total $\&$ amort.}\\
    {\small exec. Times}   
\end{tabular} & Graph & {\small $\big|\mathcal{A}\big|$} &{\small $\big| \min \mathcal{D}_{\geq 0}\big|$} & \begin{tabular}{c}
    {\small Total $\&$ amort.}\\
    {\small exec. Times}   
\end{tabular} \\
\hline
%&&&&&\\
\begin{tikzpicture}[scale=0.5, line width=0.7pt]
\tikzstyle{every node}=[minimum width=4.5pt, inner sep=0pt, circle]
\draw (-1.3,0) node (v1) [draw, fill=gray, label=left:{\notag}] {};
\draw (-0.75,-0.9) node (v2) [draw, fill=gray, label=below:{\notag}] {};
\draw (0.75,-0.9) node (v3) [draw, fill=gray, label=below:{\notag}] {};
\draw (1.3,0) node (v4) [draw, fill=gray, label=right:{\notag}] {};
\draw (0,0.9) node (v5) [draw, fill=gray, label=above:{\notag}] {};
\draw (v1) -- (v2) -- (v3) -- (v4) -- (v5) -- (v1);
\draw (v3) -- (v5) -- (v2);
\end{tikzpicture}&\begin{tabular}{c}
$489$\\
\\
\end{tabular}&\begin{tabular}{c}
$3647$\\
\\
\end{tabular}&\begin{tabular}{c}
51.6 min.\\
$\&$\\
0.84 sec.
\end{tabular}&
\begin{tikzpicture}[scale=0.5, line width=0.7pt]
\tikzstyle{every node}=[minimum width=4.5pt, inner sep=0pt, circle]
\draw (-1.3,0) node (v1) [draw, fill=gray, label=left:{\notag}] {};
\draw (-0.9,-1.1) node (v2) [draw, fill=gray, label=below:{\notag}] {};
\draw (0.8,-1.1) node (v3) [draw, fill=gray, label=below:{\notag}] {};
\draw (1.3,0) node (v4) [draw, fill=gray, label=right:{\notag}] {};
\draw (0,0.7) node (v5) [draw, fill=gray, label=above:{\notag}] {};
\draw (v1) -- (v2) -- (v3) -- (v4) -- (v2);
\draw (v3) -- (v1);
\draw (v2) -- (v5) -- (v3);
\end{tikzpicture} 
&\begin{tabular}{c}
$2181$\\
\\
\end{tabular}&\begin{tabular}{c}
$4942$\\
\\
\end{tabular}&\begin{tabular}{c}
32.3 min.\\
$\&$\\
0.39 sec.
\end{tabular}
\\
\hline
\begin{tikzpicture}[scale=0.5, line width=0.7pt]
\tikzstyle{every node}=[minimum width=4.5pt, inner sep=0pt, circle]
\draw (-1.3,0) node (v1) [draw, fill=gray, label=left:{\notag}] {};
\draw (-0.7,-0.9) node (v2) [draw, fill=gray, label=below:{\notag}] {};
\draw (0.7,-0.9) node (v3) [draw, fill=gray, label=below:{\notag}] {};
\draw (1.3,0) node (v4) [draw, fill=gray, label=right:{\notag}] {};
\draw (0,0.9) node (v5) [draw, fill=gray, label=above:{\notag}] {};
\draw (v5) -- (v1) -- (v2) -- (v3) -- (v4);
\draw (v1) -- (v4);
\draw (v5) -- (v4);
\draw (v2) -- (v5) --(v3);
\end{tikzpicture} & \begin{tabular}{c}
$1419$\\
\\
\end{tabular}&\begin{tabular}{c}
$7405$\\
\\
\end{tabular}&\begin{tabular}{c}
34.3 min.\\
$\&$\\
0.28 sec.
\end{tabular} &
\begin{tikzpicture}[scale=0.5, line width=0.7pt]
\tikzstyle{every node}=[minimum width=4.5pt, inner sep=0pt, circle]
\draw (-1.3,0) node (v1) [draw, fill=gray, label=left:{\notag}] {};
\draw (-0.75,-1) node (v2) [draw, fill=gray, label=below:{\notag}] {};
\draw (0.75,-1) node (v3) [draw, fill=gray, label=below:{\notag}] {};
\draw (1.3,0) node (v4) [draw, fill=gray, label=right:{\notag}] {};
\draw (0,0.8) node (v5) [draw, fill=gray, label=above:{\notag}] {};
\draw (v4) -- (v5) -- (v1) -- (v2) -- (v3) -- (v4);
\draw (v3) -- (v1) -- (v4) -- (v2);
\end{tikzpicture}&\begin{tabular}{c}
$1541$\\
\\
\end{tabular}&\begin{tabular}{c}
$8702$\\
\\
\end{tabular}&\begin{tabular}{c}
74.1 min.\\
$\&$\\
0.51 sec.
\end{tabular}\\
\hline
\begin{tikzpicture}[scale=0.5, line width=0.7pt]
\tikzstyle{every node}=[minimum width=4.5pt, inner sep=0pt, circle]
\draw (-1.3,0) node (v1) [draw, fill=gray, label=left:{\notag}] {};
\draw (-0.7,-1) node (v2) [draw, fill=gray, label=below:{\notag}] {};
\draw (0.7,-1) node (v3) [draw, fill=gray, label=below:{\notag}] {};
\draw (1.3,0) node (v4) [draw, fill=gray, label=right:{\notag}] {};
\draw (0,0.9) node (v5) [draw, fill=gray, label=above:{\notag}] {};
\draw (v4) -- (v5) -- (v1) -- (v2) -- (v3) -- (v4);
\draw (v3) -- (v1) -- (v4) -- (v2) -- (v5);
\end{tikzpicture} 
&\begin{tabular}{c}
$3325$\\
\\
\end{tabular} &\begin{tabular}{c}
$17349$\\
\\
\end{tabular} & \begin{tabular}{c}
228.8 min.\\
$\&$\\
0.79 sec.
\end{tabular}&
\begin{tikzpicture}[scale=0.5, line width=0.7pt]
\tikzstyle{every node}=[minimum width=4.5pt, inner sep=0pt, circle]
\draw (-1.3,0) node (v1) [draw, fill=gray, label=left:{\notag}] {};
\draw (-0.75,-1) node (v2) [draw, fill=gray, label=below:{\notag}] {};
\draw (0.75,-1) node (v3) [draw, fill=gray, label=below:{\notag}] {};
\draw (1.3,0) node (v4) [draw, fill=gray, label=right:{\notag}] {};
\draw (0,0.9) node (v5) [draw, fill=gray, label=above:{\notag}] {};
\draw (v4) -- (v5) -- (v1) -- (v2) -- (v3) -- (v4);
\draw (v5) -- (v3) -- (v1) -- (v4) -- (v2) -- (v5);
\end{tikzpicture} &\begin{tabular}{c}
$12231$\\
\\
\end{tabular} &\begin{tabular}{c}
$32701$\\
\\
\end{tabular} &\begin{tabular}{c}
797.6 min.\\
$\&$\\
1.46 sec.
\end{tabular}
\\
\hline
\end{tabular}
\end{center}
\caption{This table presents the last six connected graphs on five vertices. Note that the total execution time is presented in minutes and the amortized time is presented in seconds.}
\label{tab:graphs3}
\end{table}

The time complexity of Algorithm~\ref{A1} is difficult to approximate in general. For simple graphs, the time complexity is of the order of
\[
(n^{k}) \prod_{s\in [n]} \big|\min \mathcal{D}_{\geq 0}(L_s)\big| \max_{\bff{d}\in \min \mathcal{D}_{\geq 0}(L)}( \prod_{i\in [n]} d_i ),
\]
for some constant $k>0$. Note that for the complete graph with $n$ vertices, applying the twin vertices method describe above. Then the time complexity is of the order 
\[
(n^{k})\big|\min \mathcal{D}_{\geq 0}(A(K_{n-1}))\big| C^{2(2^n -1)}.
\] 
Improving the execution time for $K_n$ by a factor of order $\left|\min \mathcal{D}_{\geq 0}(A(K_{n-1}))\right|^{n-1}.$
This means that, for instance, instead of roughly $13.5$ hours of execution time for $K_5$, as seen in Table~\ref{tab:graphs3}. 
The same computer would take several months to finish computing Algorithm~\ref{A1} without using the symmetries of twin vertices. 

\noindent {\bf Acknowledgments}

The authors would like to thank the anonymous referee for their helpful comments.

%==================================================================% 
%==========================Bibliografia===========================% 
%==================================================================%

\end{document}